\newcommand{\defn}[1]{{\color{darkred}\emph{#1}}} 
\definecolor{darkblue}{rgb}{0.0,0,0.7} 
\definecolor{darkred}{rgb}{0.7,0,0} 
\definecolor{darkgreen}{rgb}{0, .6, 0} 
\newcommand{\area}{\mathsf{area}}
\newcommand{\dinv}{\mathsf{dinv}}
\newcommand{\bounce}{\mathsf{bounce}}
\newcommand{\depth}{\mathsf{depth}}
\newcommand{\ddinv}{\mathsf{ddinv}}
\newcommand{\readA}{\mathsf{read}_{A}}
\newcommand{\readD}{\mathsf{read}_{D}}
\newcommand{\coinv}{\mathsf{coinv}}
\newtheorem{theorem}{Theorem}[section]
\newtheorem{lemma}[theorem]{Lemma}
\newtheorem{corollary}[theorem]{Corollary}
\newtheorem{definition}[theorem]{Definition}
\newtheorem{proposition}[theorem]{Proposition}
\newtheorem{example}[theorem]{Example}
\newtheorem{remark}[theorem]{Remark}
\numberwithin{equation}{section}
\title{An area-depth symmetric $q,t$-Catalan polynomial}
\author[J.~Pappe]{Joseph Pappe}
\address[J. Pappe]{Department of Mathematics, UC Davis, One Shields Ave., Davis, CA 95616-8633, U.S.A.}
\email{jhpappe@ucdavis.edu}
\author[D.~Paul]{Digjoy Paul}
\address[D. Paul]{School of Mathematics, Tata Institute of Fundamental Research, 1st Homi Bhaba Road, Colaba, Mumbai -- 400005, India}
\email{dpaul@math.tifr.res.in}
\author[A.~Schilling]{Anne Schilling}
\address[A. Schilling]{Department of Mathematics, UC Davis, One Shields Ave., Davis, CA 95616-8633, U.S.A.}
\email{anne@math.ucdavis.edu}
\date{\today}
\keywords{$q,t$-Catalan numbers, parking functions, Dyck paths, plane trees}
\subjclass[2010]{Primary 05A19, 05E10; Secondary 05C05, 05C30}
\date{\today}
\begin{document}

\begin{abstract}
We define two symmetric $q,t$-Catalan polynomials in terms of the area and depth statistic and in terms of the dinv and dinv of depth
statistics. We prove symmetry using an involution on plane trees. The same involution proves symmetry of the Tutte polynomials. We also provide
a combinatorial proof of a remark by Garsia et al. regarding parking functions and the number of connected graphs on a fixed number of vertices. 
\end{abstract}

\maketitle 

\section{Introduction}

The $q,t$-Catalan functions were first introduced in connection with Macdonald polynomials and Garsia--Haiman's theory of diagonal 
harmonics~\cite{GarsiaHaiman.1996} as certain rational functions in $q$ and $t$. They can be obtained as the bigraded Hilbert series of the 
alternating component of a certain module of diagonal harmonics, whose dimension is equal to the Catalan number $C_n = \frac{1}{n+1} \binom{2n}{n}$. 
In terms of symmetric functions, they can be expressed using of the nabla operator and the elementary symmetric functions $e_n$ as
\[
	C_n(q,t) = \langle \nabla e_n, e_n \rangle.
\]
The combinatorics of the $q,t$-Catalan polynomials was developed in various papers~\cite{GarsiaHaglund.2002,Haglund.2003,Haglund.2008}.
In particular, Haglund~\cite{Haglund.2003} gave a combinatorial formula as a sum over all Dyck paths graded by the area and bounce statistics
(see~\eqref{equation.C bounce}). Shortly thereafter, Haiman announced a different combinatorial formula using the area and dinv statistics
(see~\eqref{equation.C dinv}). The zeta map~\cite{AKOP.2002,Haglund.2008} relates these two combinatorial formulas. One of the main open
problems related to the $q,t$-Catalan polynomials $C_n(q,t)$ is a combinatorial proof of its symmetry in $q$ and $t$.

In this paper, we introduce two different $q,t$-analogues of the Catalan numbers $C_n$, which are symmetric in $q$ and $t$. We also get a new formula
for the original $q,t$-Catalan polynomials (see Corollary~\ref{corollary.Cqt depth}).

The first polynomial $F_n(q,t)$ (see~\eqref{equation.F}) is the sum over all Dyck paths graded by area and depth. There are several maps from Dyck 
paths to plane trees, see for example Definitions~\ref{definition.stanley}, \ref{definition.haglund} and \ref{definition.BM} 
from~\cite{Stanley.2015,Haglund.2008,BM.1996} below. The intuition for the depth statistics is that it is the sum over the depths of the various vertices 
in the plane tree. (This is related to a particular labelling of the vertices in a plane tree as given in Definition~\ref{definition.label D}.) The symmetry in 
$q$ and $t$ is proved by defining a duality on plane trees, which switches the area and depth sequence. This duality turns out to be a composition of 
the maps in Definitions~\ref{definition.stanley} and~\ref{definition.haglund}. We prove that on Dyck paths, the corresponding involution is equal to a 
recursively defined involution introduced by Deutsch~\cite{Deutsch.1999}. In particular, this gives an alternative proof of the symmetry of the 
Tutte polynomial for the Catalan matroid~\cite{Ardila.2003}. The polynomials $F_n(q,t)$ satisfy a recursion that relates them to $q,t$-Catalan polynomials 
defined in terms of increasing/decreasing factorizations~\cite[Section 5]{IrvingRattan.2021} and to Hurwitz graphs~\cite{AdinRoichman.2014}.

The second polynomial $G_n(q,t)$ (see~\eqref{equation.G}) is defined in terms of the $\mathsf{dinv}$ and $\mathsf{dinv}$ of depth statistics denoted
$\mathsf{ddinv}$. The $\mathsf{dinv}$ statistics can be formulated using the area sequence, so using the depth sequence instead yields the 
$\mathsf{dinv}$ of depth statistics. This polynomial is also symmetric in $q$ and $t$.

We also address a remark in~\cite{GHQR.2019} stating that the sum of parking functions graded by two to the area is equal to the 
number of connected graphs on a fixed number of vertices.

The paper is organized as follows. In Section~\ref{section.background} we review the definitions associated with the $q,t$-Catalan polynomials
$C_n(q,t)$ and define the polynomials $F_n(q,t)$ and $G_n(q,t)$. In particular, the definition of $\mathsf{depth}$ and $\mathsf{ddinv}$ is given. 
Furthermore, we review several maps from Dyck paths to plane trees. Our main results are stated in Section~\ref{section.results}. In particular, a recursion 
for $F_n(q,t)$ is proved as well as symmetry of $F_n(q,t)$ and $G_n(q,t)$ using an involution $\omega$ that interchanges area and depth. The paper 
concludes with some further results on parking functions.

\subsection*{Acknowledgments}
We would like to thank Erik Carlsson, Jim Haglund, Marino Romero, and Vasu Tewari for discussions. We thank the AIM Link Homology
Research Community for stimulating talks and discussions.

AS was partially supported by NSF grant DMS--1760329 and DMS--2053350.
JP and AS were partially supported by NSF grant DMS--1764153.

\section{Background and Definitions}
\label{section.background}

In Section~\ref{section.dyck_paths}, we review Dyck paths and their various statistics. In Section~\ref{section.depth}, we define new statistics and 
related polynomials. 
In Section~\ref{section.plane_trees} we give background knowledge on plane trees and their various connections to Dyck paths.
We conclude in Section~\ref{section.trees and parking_functions} with the definition and some results on parking functions and labelled trees.

\subsection{Dyck Paths}
\label{section.dyck_paths}

A \defn{Dyck path} of semilength $n$ is a lattice path with vertices in $\mathbb{Z}_{\geqslant 0} \times \mathbb{Z}_{\geqslant 0} $ from $(0,0)$ to $(n,n)$ 
consisting of North $(1,0)$ and  East $(0,1)$ steps that never passes below the line $y = x$. Let the set of all Dyck paths with semilength $n$ be denoted by 
$D_n$. It is well known that $D_n$ is enumerated by the $n$-th \defn{Catalan number} $C_n = \frac{1}{n+1}\binom{2n}{n}$.

Given $\pi \in D_n$, let the \defn{area sequence} of $\pi$ be the vector $(a_{1}(\pi), a_{2}(\pi), \ldots, a_{n}(\pi))$, where $a_{i}(\pi)$ is the number of 
full unit squares in the $i$-th row completely between $\pi$ and the diagonal $y=x$. Let
\begin{equation}
\label{equation.area}
	\area(\pi) = \sum_{i = 1}^{n} a_{i}(\pi),
\end{equation}
that is, the total number of squares between the path $\pi$ and the diagonal. Note that a Dyck path is uniquely determined by its area sequence. 
Additionally, a vector $(a_1, a_{2}, \ldots, a_{n}) \in \mathbb{Z}_{\geqslant 0}^{n}$ is an area sequence of some Dyck path in $D_{n}$ if and only 
if $a_{1} = 0$ and $0 \leqslant a_{i} \leqslant a_{i-1}+1$ for $2 \leqslant i \leqslant n$. 

Using the area sequence of a Dyck path $\pi$, we can define another statistic on Dyck paths as follows
\begin{equation}
\label{equation.dinv}
	\dinv(\pi) = \lvert \{(i,j) \mid i < j , \, a_i(\pi) = a_j(\pi)\} \cup \{(i,j) \mid i < j , \, a_i(\pi) = a_j(\pi)+1\}\rvert.
\end{equation}
The \defn{$q,t$-Catalan polynomial} is defined as
\begin{equation}
\label{equation.C dinv}
	C_{n}(q,t) = \sum_{\pi \in D_n} q^{\area(\pi)}t^{\dinv(\pi)}.
\end{equation}
The polynomial $C_{n}(q,t)$ is symmetric in $q$ and $t$, that is, $C_n(q,t) = C_n(t,q)$ (see for example~\cite{Haglund.2008}). 
It is an open question to find a combinatorial proof of its symmetry.

To define the bounce statistic of $\pi \in D_n$, we first must construct the bounce path $\mathcal{B}(\pi)$ by the following algorithm: 
\renewcommand\labelenumi{(\arabic{enumi})}
\renewcommand\theenumi\labelenumi
\begin{enumerate} 
	\item Start at the point (0,0).
	\item Continue North until the start of an East step of $\pi$ is met.
	\item Continue East until the diagonal $y = x$ is met.
	\item If the bounce path has reached the point $(n, n)$, then stop. Otherwise go back to step (2).
\end{enumerate}
Let $(0, 0) = (b_{0}, b_{0}), (b_{1}, b_{1}), \ldots, (b_{k}, b_{k}) = (n, n)$ be the points on the diagonal that $\mathcal{B}(\pi)$ touches. Then
\defn{bounce} is defined as
\begin{equation}
\label{equation.bounce}
	\bounce(\pi) = \sum_{i=1}^{k-1} n-b_{i}.
\end{equation}

\begin{proposition} \cite{Haglund.2008}
We have
\begin{equation}
\label{equation.C bounce}
	C_{n}(q, t) = \sum_{\pi \in D_n} q^{\area(\pi)}t^{\bounce(\pi)}.
\end{equation}
\end{proposition}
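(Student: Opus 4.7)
The plan is to prove the identity via the \emph{zeta map}, a bijection $\zeta \colon D_n \to D_n$ (from~\cite{AKOP.2002}, see also~\cite{Haglund.2008}) with the key property that
$$\area(\zeta(\pi)) = \dinv(\pi) \quad \text{and} \quad \bounce(\zeta(\pi)) = \area(\pi).$$
Granting this, re-indexing $\sum_{\sigma \in D_n} q^{\area(\sigma)} t^{\bounce(\sigma)}$ by $\sigma = \zeta(\pi)$ yields $\sum_{\pi \in D_n} q^{\dinv(\pi)} t^{\area(\pi)}$, which equals $C_n(t,q)$ by~\eqref{equation.C dinv}. The already cited symmetry $C_n(q,t) = C_n(t,q)$ then gives~\eqref{equation.C bounce}.

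To construct $\zeta(\pi)$ from $\pi$ with area sequence $(a_1(\pi), \ldots, a_n(\pi))$, first group indices by level: for each $k \geqslant 0$ set $m_k = \lvert \{i : a_i(\pi) = k\} \rvert$. The bounce path of $\zeta(\pi)$ is prescribed to touch the diagonal at the successive points $(0,0)$, $(m_0, m_0)$, $(m_0{+}m_1, m_0{+}m_1), \ldots$, which forces
$$\bounce(\zeta(\pi)) = \sum_{k \geqslant 1} k \, m_k = \sum_{i=1}^{n} a_i(\pi) = \area(\pi).$$
To pin down $\zeta(\pi)$ itself (not merely its bounce skeleton), I would read the entries of $\pi$ in passes by area level, so that inside the $k$-th stage of the bounce path the horizontal positions of the N-steps record how indices at level $k$ in $\pi$ interleave with those at level $k-1$. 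An explicit inverse, which recovers $\pi$ by reading off the levels from the bounce structure of $\zeta(\pi)$, establishes that $\zeta$ is a bijection.

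The main obstacle is verifying the remaining identity $\area(\zeta(\pi)) = \dinv(\pi)$. I would do this by exhibiting a cell-by-cell correspondence between area cells of $\zeta(\pi)$ and $\dinv$-pairs of $\pi$: pairs $(i,j)$ with $a_i(\pi) = a_j(\pi)$ account for area cells within a single bounce stage of $\zeta(\pi)$, while pairs with $a_i(\pi) = a_j(\pi) + 1$ account for area cells straddling two consecutive bounce stages. This matching is essentially forced by the construction, but it requires careful bookkeeping; once established, combining it with the bounce identity and the symmetry of $C_n(q,t)$ finishes the proof.
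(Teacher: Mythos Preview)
Your approach is correct and matches the paper's treatment: the paper does not supply its own proof of this proposition but simply cites~\cite{Haglund.2008}, and immediately afterward points to the zeta map as the bijection establishing that~\eqref{equation.C dinv} and~\eqref{equation.C bounce} agree, exactly as you outline. Your reliance on the symmetry $C_n(q,t)=C_n(t,q)$ is also consistent with the paper, which records that symmetry (again via~\cite{Haglund.2008}) just before stating the proposition; note that without invoking symmetry the zeta map only gives $\sum_\pi q^{\area(\pi)}t^{\dinv(\pi)}=\sum_\sigma q^{\bounce(\sigma)}t^{\area(\sigma)}$, so this step is genuinely needed in the direction you chose.
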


There exists a bijection $\zeta \colon D_n \to D_n$ on Dyck paths, called the \defn{zeta map}, which has the property that for $\pi \in D_n$
\[
\begin{split}
	\area(\pi) &= \bounce(\zeta(\pi)),\\
	\dinv(\pi) &= \area(\zeta(\pi)).
\end{split}
\]
This proves that~\eqref{equation.C dinv} and~\eqref{equation.C bounce} are equal. The inverse of the zeta map first appeared
connection with nilpotent ideals in certain Borel subalgebras of $\mathfrak{sl}(n)$~\cite{AKOP.2002}. For its connections with the combinatorics of
$q,t$-Catalan polynomials, see~\cite{Haglund.2008}. The zeta map was further studied and generalized 
in~\cite{ALW.2015,CDH.2016,TW.2018,CFM.2020}. For the definition of the zeta map, see~\cite[Theorem 3.15]{Haglund.2008}.
In Proposition~\ref{proposition.zeta} below, we state another formulation of the zeta map in terms of plane trees (which can also serve as the definition).

\subsection{Depth polynomials}
\label{section.depth}

Let $\pi \in D_n$. We produce a labelling for $\pi$ column-by-column using the following algorithm:
\renewcommand\labelenumi{(\arabic{enumi})}
\renewcommand\theenumi\labelenumi
\begin{enumerate} 
	\item In the leftmost column, label all cells directly to the right of a North step with a $0$. 
	\item In the $i$-th column from the left, locate the bottommost cell $c$ in the column that is directly right of a North step; note that such a cell may not exist.
	 From $c$ travel Southwest diagonally until a cell $c'$ that is already labelled is reached. Let $\ell$ be the labelling of $c'$. Label all cells directly to the 
	 right of a North step in the $i$-th column with an $\ell +1$.
\end{enumerate}
Define this to be the \defn{depth labelling} of $\pi$. The \defn{depth sequence} $(d_1(\pi), d_2(\pi), \ldots, d_n(\pi))$ of $\pi$ can be obtained by reading the 
entries of the depth labelling of $\pi$ in the following manner:
\renewcommand\labelenumi{(\arabic{enumi})}
\renewcommand\theenumi\labelenumi
\begin{enumerate} 
	\item Let $v$ be the empty vector. Let $c$ be the cell directly right of the first North step of $\pi$.
	\item Append the label of $c$ to the end of $v$. If the length of $v$ is $n$, then stop and let 
	\[
	    (d_1(\pi), d_2(\pi), \ldots, d_n(\pi)) = v.
	\]
	\item Otherwise, travel Northeast diagonally from $c$ until a cell that is labelled is reached. If this cell exists and has not been seen before, then 
	redefine $c$ to be this cell. If no such cell exists or the cell was already visited before by the algorithm, then consider the set of all cells that have 
	been visited already but have a labelled cell directly above them that has not been visited. Out of this set choose the rightmost one 
	and let $c$ be the cell directly above this cell. Go back to step (2).
\end{enumerate}

\begin{remark}
Note that in the above definition, the rightmost cell of all visited cells with a labelled cell directly above is also the cell in this set with the largest
label. Namely, look at the lowest cell in the same column as $c$, which is labelled. All cells that were already visited but have a labelled cell directly above
them are to the left of this cell on the same diagonal or lower. By the construction of the labels, these cells all have strictly smaller labels.
\end{remark}

Define the \defn{depth} statistic as follows
\begin{equation}
\label{equation.depth}
	\depth(\pi) = \sum_{i=1}^{n} d_{i}(\pi).
\end{equation}
Similar to how $\dinv$ was defined in terms of the area sequence in~\eqref{equation.dinv}, we can associate a ``$\dinv$" type statistic called $\ddinv$ 
to the depth sequence of a Dyck path. Formally, 
\begin{equation}
\label{equation.ddinv}
	\ddinv(\pi) = \lvert \{(i,j) \mid i < j , \, d_i(\pi) = d_j(\pi)\} \cup \{(i,j) \mid i < j , \, d_i(\pi) = d_j(\pi)+1\}\rvert.
\end{equation}

\begin{figure}[t]
	\centering
	\begin{tikzpicture}[scale = .5]
	
	\draw[step=1.0, gray!60, thin] (0,0) grid (9,9);
	
	\draw[gray!60, thin] (0,0) -- (9,9);
	
	\draw[blue!60, line width=2pt] (0,0) -- (0,1) -- (0,2) -- (0,3) -- (1,3) -- (2,3) -- (2,4) -- (3,4) -- (3,5) -- (3,6) -- (4,6) -- (5,6) -- (6,6) -- (6,7) -- (6,8) -- (7,8) -- (7,9) -- (8,9) -- (9,9);
	
	\draw (0.5,0.5) node {$0$};
	\draw (0.5,1.5) node {$0$};
	\draw (0.5,2.5) node {$0$};
	\draw (2.5,3.5) node {$1$};
	\draw (3.5,4.5) node {$2$};
	\draw (3.5,5.5) node {$2$};
	\draw (6.5,6.5) node {$1$};
	\draw (6.5,7.5) node {$1$};
	\draw (7.5,8.5) node {$2$};

	\end{tikzpicture}
	\caption{Example of a Dyck path $\pi \in D_9$ with its depth labelling.}
	\label{figure.depth}
\end{figure}
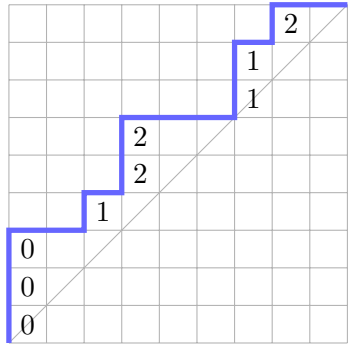

\begin{example}
\label{example.depth}
In Figure~\ref{figure.depth}, a Dyck path $\pi \in D_9$ with its depth labelling is shown. The depth sequence is
$(0,1,1,2,0,1,2,2,0)$. Hence the depth is $\depth(\pi)=9$. Finally
\[
	\{ (1,5), (1,9), (5,9), (2,3), (2,6), (3,6), (4,7), (4,8), (7,8), (2,5), (2,9), (3,5), (3,9), (6,9), (4,6)\}
\]
are pairs contributing to the $\ddinv$ statistic in~\eqref{equation.ddinv}, hence $\ddinv(\pi)=15$.
\end{example}

Next we define two $q,t$-Catalan polynomials using the just introduced statistics:
\begin{equation}
\label{equation.F}
	F_{n}(q, t) = \sum_{\pi \in D_n} q^{\area(\pi)}t^{\depth(\pi)}
\end{equation}
and
\begin{equation}
\label{equation.G}
G_{n}(q,t) = \sum_{\pi \in D_n} q^{\dinv(\pi)}t^{\ddinv(\pi)}.
\end{equation}
We will prove various properties of these polynomials in Section~\ref{section.results}, including that they are symmetric in $q$ and $t$.

\begin{example}
We list the first few polynomials:
\[
\resizebox{0.95\hsize}{!}{$
\begin{array}{|c|c|c|c|}
\hline
n & C_n(q,t) & F_n(q,t) & G_n(q,t)\\[1mm] \hline 
1 & 1 & 1 & 1 \\[1mm] \hline
2 & q+t & q+t &q+t \\[1mm] \hline
3 & q^3 + q^2 t + q t^2 + t^3 + qt & q^3 + q^2 t + q t^2 + t^3 + q t& q^2t^2 + q^3 + t^3 + 2qt\\[1mm] \hline
4 & q^6 + q^5 t + q^4 t^2 + q^3 t^3 + q^2 t^4 + q t^5 + t^6 & q^6 + q^5 t + q^4 t^2 + 2q^3 t^3 + q^2 t^4 + q t^5 + t^6 & q^5 t^2 + q^4t^3 + q^3t^4 + q^2t^5 \\
   &+ q^4 t + q^3 t^2 + q^2 t^3 + q t^4  & + q^4 t + q t^4 & + q^6 + q^4t^2 + q^2t^4 + t^6\\ 
   & + q^3 t + q^2 t^2 + q t^3 & + q^3 t + 2 q^2 t^2 + q t^3& + 2q^3t + 2qt^3 + q^2t + qt^2\\[1mm] \hline
\end{array}
$}
\]
\end{example}

\begin{remark}
Note that $C_n(1,1)=F_n(1,1)=G_n(1,1)=C_n$ are all equal to the $n$-th Catalan number. The difference $F_n(q,t) - C_n(q,t)$ can be written as
$(1-t)(1-q)M_n(q,t)$. Evaluating $M_n(1,1)$ yields the sequence $0,0,0,1,14,124,888,5615,32714,\ldots$, which curiously is the 5-th number
after each 1 in the Riordan array, see~\cite{Sloane}.
Both $(G_n - C_n)/((q-1)(t-1))$ and $(G_n - F_n)/((q-1)(t-1))$ are also conjectured to have positive coefficients. At $q=t=1$, the corresponding sequences
are $0, 0, 0, 1, 11, 83, 530, 3071, 16997, 86778, 436084, \ldots$ and $0, 0, 0, 1, 10, 69, 406, 2183, 11082, 54064,$ $256204, \ldots$, which do not
seem to appear in~\cite{Sloane}.
\end{remark}

\subsection{Plane Trees}
\label{section.plane_trees}

In this paper, all rooted trees are drawn with the root on top and its descendants below. The principal subtrees of a rooted tree $T$ are the rooted 
trees obtained by removing the root of $T$ and considering the children of the root of $T$ to be the new roots of their respective trees. 

\begin{definition}
A \defn{plane tree} is a rooted tree, which either consists only of the root vertex $r$ or it consists recursively of the root $r$ and its principal subtrees
$(T_1,\ldots,T_k)$ which themselves are plane trees. Note that the subtrees are linearly ordered.
Let the set of all plane trees on $n+1$ vertices be denoted by $\mathcal{T}_{n+1}$. 
\end{definition}

Note that $\mathcal{T}_{n+1}$ is also enumerated by the $n$-th Catalan number $C_{n}$. This can be shown by a bijection between $D_{n}$ and 
$\mathcal{T}_{n+1}$. Here, we discuss three such bijections that will be useful to us. The first bijection can, for example, be found 
in~\cite[Page 10]{Stanley.2015}.

\begin{definition}
\label{definition.stanley}
Let the \defn{Stanley map} $\sigma\colon D_{n} \to \mathcal{T}_{n+1}$ be defined as follows:
\renewcommand\labelenumi{(\arabic{enumi})}
\renewcommand\theenumi\labelenumi
	\begin{enumerate} 
		\item Consider the Dyck path $\pi$ as a string $\pi_{1} \pi_{2} \ldots \pi_{2n}$ of length $2n$ in the alphabet \{N, E\} corresponding to the 
		North and East steps of $\pi$.
		\item Start at the root node. Label this as vertex $v$.
		\item For $1\leqslant i \leqslant 2n$, if $\pi_{i} = N$ then add a child to the right of all preexisting children of $v$. Label this new child as $v$. 
		If $\pi_{i} = E$, set $v$ to be the parent of $v$.
	\end{enumerate}
\end{definition}

\begin{example}
The Dyck path of Figure~\ref{figure.depth} corresponds to the plane tree in Figure~\ref{figure.plane tree S} under $\sigma$.
\end{example}

The next bijection is the restriction of a bijection between parking functions and labelled trees to Dyck paths. The bijection on parking functions
can, for example, be found in~\cite{HL.2005} and~\cite[Chapter 5]{Haglund.2008}.

\begin{definition}
\label{definition.haglund}
Let the \defn{Haglund--Loehr map} $\eta\colon D_{n} \to \mathcal{T}_{n+1}$ be defined as follows:
\renewcommand\labelenumi{(\arabic{enumi})}
\renewcommand\theenumi\labelenumi
	\begin{enumerate} 
		\item For each cell in the first column that lies directly right of a North step attach a child to the root vertex. Associate the rightmost child 
		to the topmost cell in the first column, the second rightmost child to the second topmost cell in the first column, and so on such that the 
		leftmost child is associated with the bottommost cell in the first column.
		\item To determine the children of any other vertex $v$, travel on the Northeast diagonal from its associated cell under $\pi$ until it reaches a 
		cell directly to the right of a North step. If this cell exists and is the bottommost cell in its column that is directly right of a North step, then attach $k$ 
		children to $v$, where $k$ is the number of cells in this column that lie directly right of a North step. For each of these new vertices, 
		associate them to the appropriate cell as laid out above.
	\end{enumerate}
\end{definition}

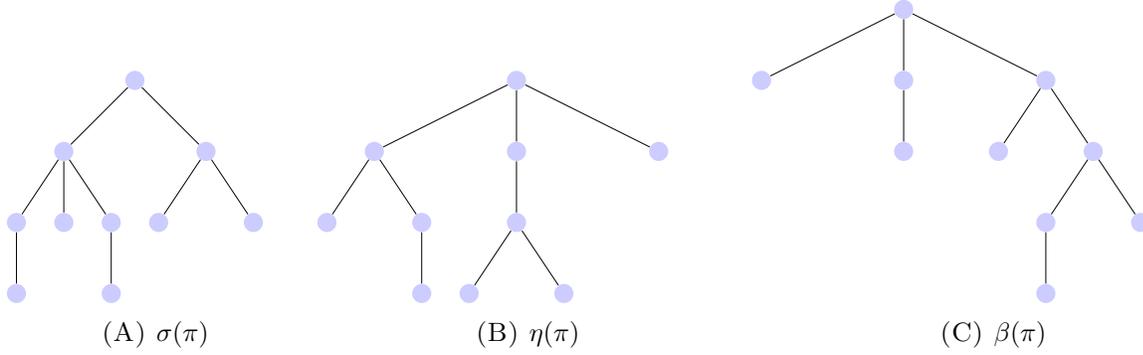
\begin{figure}[t]
\begin{subfigure}[b]{.25\linewidth}
\scalebox{0.7}{
        \centering
        \begin{tikzpicture}
             [scale=.9,auto=center, every node/.style={circle,fill=blue!20}]
    
  \node (a1) at (0,0) {};  
  \node (a2) at (-1.5,-1.5)  {};  
  \node (a3) at (1.5,-1.5) {};  
  \node (b1) at (-2.5,-3) {};
  \node (b2) at (-1.5,-3) {};
  \node (b3) at (-0.5,-3) {};
  \node (b4) at (0.5,-3) {};
  \node (b5) at (2.5,-3) {};
  \node (c1) at (-2.5,-4.5) {};
  \node (c2) at (-0.5,-4.5) {};
  
  \draw (a1) -- (a2); 
  \draw (a1) -- (a3);  
  \draw (a2) -- (b1);
  \draw (a2) -- (b2);
  \draw (a2) -- (b3);
  \draw (a3) -- (b4);
  \draw (a3) -- (b5);
  \draw (b1) -- (c1);
  \draw (b3) -- (c2);
        \end{tikzpicture}
  }      
        \caption{$\sigma(\pi)$}
        \label{figure.plane tree S}
      
    \end{subfigure}%
    \begin{subfigure}[b]{.35\linewidth}
    \scalebox{0.7}{
        \centering
        \begin{tikzpicture}  
    [scale=.9,auto=center, every node/.style={circle,fill=blue!20}]
    
  \node (a1) at (0,0) {};  
  \node (a2) at (-3,-1.5)  {};  
  \node (a3) at (0,-1.5) {};
  \node (a4) at (3,-1.5) {};  
  \node (b1) at (-4,-3) {};
  \node (b2) at (-2,-3) {};
  \node (b3) at (0,-3) {};
  \node (c1) at (-2,-4.5) {};
  \node (c2) at (-1,-4.5) {};
  \node (c3) at (1,-4.5) {};
  
  \draw (a1) -- (a2); 
  \draw (a1) -- (a3);  
  \draw (a1) -- (a4);  
  \draw (a2) -- (b1);  
  \draw (a2) -- (b2); 
  \draw (a3) -- (b3); 
  \draw (b2) -- (c1); 
  \draw (b3) -- (c2);
  \draw (b3) -- (c3);
\end{tikzpicture}
}
        \caption{$\eta(\pi)$}
        \label{figure.plane tree H}

    \end{subfigure}%
    \begin{subfigure}[b]{.4\linewidth}
    \scalebox{0.7}{
        \centering
        \begin{tikzpicture}  
  [scale=.9,auto=center, every node/.style={circle,fill=blue!20}]
    
  \node (a1) at (0,0) {};  
  \node (a2) at (-3,-1.5)  {};  
  \node (a3) at (0,-1.5) {};
  \node (a4) at (3,-1.5) {};  
  \node (b1) at (0,-3) {};
  \node (b2) at (2,-3) {};
  \node (b3) at (4,-3) {};
  \node (c1) at (3,-4.5) {};
  \node (c2) at (5,-4.5) {};
  \node (d1) at (3,-6) {};
  
  \draw (a1) -- (a2); 
  \draw (a1) -- (a3);  
  \draw (a1) -- (a4);  
  \draw (a3) -- (b1);  
  \draw (a4) -- (b2); 
  \draw (a4) -- (b3); 
  \draw (b3) -- (c1); 
  \draw (b3) -- (c2);
  \draw (c1) -- (d1);
\end{tikzpicture}
}
        \caption{$\beta(\pi)$}
        \label{figure.plane tree BM}
\end{subfigure}
    \caption{Plane trees corresponding to the Dyck path $\pi$ of Figure \ref{figure.depth} under $\sigma,\eta$, and $\beta$, respectively.
    \label{figure.plane tree}}
 \end{figure}

\begin{example}
The Dyck path of Figure~\ref{figure.depth} corresponds to the plane tree in Figure~\ref{figure.plane tree H} under $\eta$.
\end{example}

The last map we mention can be found in~\cite{BM.1996}.

\begin{definition}
\label{definition.BM}
Let the \defn{Benchekroun--Moszkowski map} $\beta\colon D_{n} \to \mathcal{T}_{n+1}$ be defined as follows:
\renewcommand\labelenumi{(\arabic{enumi})}
\renewcommand\theenumi\labelenumi
	\begin{enumerate} 
		\item Consider the Dyck path $\pi$ as a string $\pi_{1} \pi_{2} \ldots \pi_{2n}$ of length $2n$ in the alphabet \{N, E\} corresponding to
		the North and East steps of $\pi$. Append $\pi_{0} = E$ to the front of the string.
		\item For each vertex, we attach one of two states: ``Checked" or ``Not Checked''. Start with just the root vertex in the ``Not Checked'' state. 
		\item Recursively consider $\pi_i$ for $i=0,1,\ldots,2n$.
		If $\pi_{i} = E$, then find the set of all closest vertices to the root in the ``Not Checked'' state. Out of these vertices choose the leftmost vertex 
		and label this vertex as $v$. Let $k$ be the number of consecutive North steps directly following $\pi_{i}$. Append $k$ children to $v$ all in 
		``Not Checked'' state. Change the state of vertex $v$ to ``Checked''. If $\pi_{i} = N$, then perform no action on the graph.
	\end{enumerate}
\end{definition}

\begin{example}
The Dyck path of Figure~\ref{figure.depth} corresponds to the plane tree in Figure~\ref{figure.plane tree BM} under $\beta$.
\end{example}

It turns out that $\sigma$ and $\beta$ can be used to obtain the zeta map.

\begin{proposition} \cite{BM.1996}
\label{proposition.zeta}
Let $\pi \in D_n$. Then $\zeta(\pi) = \beta^{-1} \circ \sigma (\pi)$.
\end{proposition}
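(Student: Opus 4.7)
The strategy is to compare $\zeta(\pi)$ and $\beta^{-1}(\sigma(\pi))$ through the common intermediate data of the plane tree $T := \sigma(\pi)$, invoking Haglund's explicit construction of the zeta map (\cite[Theorem 3.15]{Haglund.2008}).

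I would first record two structural observations. By the Stanley construction, if $w_1, w_2, \ldots, w_n$ denote the non-root vertices of $T$ in DFS preorder, then $a_i(\pi) = \depth_T(w_i) - 1$, since an $N$-step in $\pi$ creates a new vertex one level below the current one in $T$ and the height above the diagonal at that step equals the current tree depth. Next, unrolling Definition~\ref{definition.BM} yields the explicit product form
\[
    \beta^{-1}(T) = N^{k_0} E N^{k_1} E \cdots E N^{k_n},
\]
where $v_0, v_1, \ldots, v_n$ is the BFS enumeration of $T$ and $k_j$ is the number of children of $v_j$. Combining these, the indices $i$ with $a_i(\pi) = k$, read in increasing order, are exactly the depth-$(k+1)$ vertices of $T$ in DFS left-to-right order; concatenating over $k = 0, 1, 2, \ldots$ reproduces the non-root BFS enumeration $v_1, \ldots, v_n$. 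Since Haglund's $\zeta$ outputs its $N$-steps precisely in this level-by-level order, the $N$-steps of $\zeta(\pi)$ and $\beta^{-1}(\sigma(\pi))$ land in the same positions.

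The remaining and most delicate step is to verify that the $E$-steps are placed identically. In $\beta^{-1}(T)$ a single $E$ separates each pair of consecutive $N$-blocks $N^{k_j}, N^{k_{j+1}}$; on the $\zeta$ side, Haglund's bounce-block rule dictates $E$-step placement in terms of transitions between area levels. I expect both rules to reduce, via the DFS-to-BFS dictionary established above, to the single prescription ``insert an $E$ exactly when the BFS cursor moves from $v_j$ to $v_{j+1}$.'' The principal obstacle is making this matching rigorous across all parent-change configurations. I would carry it out by induction on $n$, peeling off the last vertex in BFS order (equivalently, the final $N$-step of both outputs) and applying the inductive hypothesis to the reduced plane tree, with the base case $n = 1$ a direct verification.
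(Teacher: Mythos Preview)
The paper does not give a proof of this proposition: it is stated with the citation \cite{BM.1996} and nothing more. So there is no in-paper argument to compare your proposal against; the result is imported as a known fact from the literature.

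Your structural setup is exactly the right picture. The Stanley map $\sigma$ encodes a plane tree via its DFS (preorder) traversal, so that $a_i(\pi)$ is one less than the depth of the $i$-th non-root vertex in preorder; the Benchekroun--Moszkowski map $\beta$ encodes the same tree via its BFS traversal, so that $\beta^{-1}(T)$ is the concatenation $N^{k_0} E\, N^{k_1} E \cdots$ of outdegrees in BFS order; and Haglund's $\zeta$ reorganises the area sequence level by level, which is precisely the DFS-to-BFS relabelling. This already pins down the $N$-steps of both $\zeta(\pi)$ and $\beta^{-1}(\sigma(\pi))$, as you say.

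The part of your plan I would flag is the proposed induction for the $E$-step matching. Peeling off the last BFS vertex $v$ of $T$ is clean on the $\beta^{-1}$ side (it deletes the final $N$), but on the $\sigma$ side $v$ is the \emph{rightmost} deepest vertex, which in DFS preorder can sit anywhere inside $\pi$; deleting it removes an adjacent $NE$ pair from the interior of $\pi$, and you then need that $\zeta$ applied to this punctured path equals $\zeta(\pi)$ with its last $N$ (and the correct $E$) removed. That compatibility is not automatic from Haglund's block-interleaving rule and is where the real work lies. A cleaner route, and the one closer in spirit to \cite{BM.1996}, is to argue directly that within each depth level the left-to-right DFS order of the vertices coincides with their BFS order (both being determined by the planar embedding), and then match Haglund's interleaving of levels $k$ and $k{+}1$ to the parent--child incidences that dictate where the $E$'s fall in $\beta^{-1}(T)$. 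This avoids the induction altogether.
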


\subsection{Labelled trees and parking functions}
\label{section.trees and parking_functions}

A \defn{labelled tree} on $n$ vertices is a tree $T$ with vertex set $\{0, 1, \ldots, n-1\}$, where the vertex labelled $0$ is considered to be the root 
of the tree. We also use the convention that in a drawing of a labelled tree any vertex $v$ sits above its children and the labels of its children increase 
from left to right.  Let $\mathcal{L}_{n}$ represent the set of all labelled trees on $n$ vertices. The cardinality of $\mathcal{L}_{n}$ is known to be $n^{n-2}$.

A \defn{coinversion} of $T \in \mathcal{L}_{n}$  is an ordered pair $(i, j)$ such that $j$ is a descendant of $i$ and $0 < i < j$. Denote the number of 
coinversions of a labelled tree $T$ by $\coinv (T)$. Gessel and Wang~\cite{GesselWang.1979}  proved combinatorially that
 \begin{equation} 
 \label{equation: coinv_graph}
	\sum_{G \in \mathcal{C}_n} q^{e(G)} = q^{n-1} \sum_{T \in \mathcal{L}_{n}} (1+q)^{\coinv(T)},
\end{equation}
where $\mathcal{C}_{n}$ is the set of all \defn{labelled connected graphs} with vertex set $\{0, \ldots, n-1\}$ and $e(G)$ is the number of edges in $G$. 
Evaluating at $q = 1$ gives the surprising result that 
\begin{equation}
	\lvert \mathcal{C}_{n} \rvert = \sum_{T \in \mathcal{L}_{n}} 2^{\coinv(T)}.
\end{equation}

\begin{remark}
Note that Gessel and Wang~\cite{GesselWang.1979} studied tree inversions instead of coinversions, but these statistics can be seen to be  jointly 
equidistributed on labelled trees by relabelling vertex $i$ by $n+1-i$ for $i \not = 0$ which was observed by Irving and
Rattan~\cite{IrvingRattan.2021}.
\end{remark}

A \defn{parking function} $P$ on $n$ cars is equivalent to a Dyck path $\pi \in D_n$, where the numbers $1$ through $n$ are placed directly right 
of the North steps of $\pi$ such that each number appears exactly once and the numbers in each column are strictly decreasing. We refer to the 
labels $\{1, \ldots, n\}$ in the parking function $P$ as cars. Denote the set of all parking functions on $n$ cars by $\mathcal{P}_{n}$. The area of 
parking function $P$ is taken to be the area of its corresponding Dyck path. The cardinality of $\mathcal{P}_{n}$ is known to be $(n+1)^{n-1}$
which implies that there exists a bijection with labelled trees on $n+1$ vertices. We review the bijection discovered in~\cite{HL.2005}.

\begin{definition}
\label{def: pf_tree_bijection}
Let the \defn{Haglund--Loehr map} $\lambda\colon \mathcal{P}_{n} \to \mathcal{L}_{n+1}$ be defined as follows:
\renewcommand\labelenumi{(\arabic{enumi})}
\renewcommand\theenumi\labelenumi
	\begin{enumerate} 
		\item Start with the root vertex labelled $0$. For each car labelled $i$ in the first column of the parking function, attach a vertex 
		labelled $i$ to the root $0$.
		\item To determine the children of any other vertex $v$, travel Northeast from its associated car until it reaches another car. If this car 
		exists and is the bottommost car in its column, attach a child labelled $i$ to $v$ for each car $i$ in the column. 
	\end{enumerate}
\end{definition}

Observe that restricting $\lambda$ to the parking functions containing car $i$ in row $i$ recovers $\eta$ of Definition~\ref{definition.haglund}
(by ordering siblings in increasing order and then disregarding the labels on the tree).

Haglund and Loehr~\cite{HL.2005} also defined a function $\tilde{d}_{i}(T)$ on the vertices $0 \leqslant i \leqslant n$ for $T\in \mathcal{L}_{n+1}$ such that 
$\tilde{d}_{0}(T) = 0$ and $\tilde{d}_{j}(T) = \tilde{d}_{i}(T)+k-1$, where vertex $j$ is the $k$-th smallest/leftmost child of vertex $i$. For any 
$P \in \mathcal{P}_{n}$, we have
\begin{equation}
\label{equation: area_tree}
	\area(P) = \sum_{i=0}^{n} \tilde{d}_{i}(\lambda(P)).
\end{equation}

\section{Results}
\label{section.results}

In Section~\ref{section.recursion}, we prove a recursion for the polynomials $F_n(q,t)$.
In Section~\ref{section.dual}, we introduce the notation of a dual plane tree using various reading words.
We use this to prove in Section~\ref{section.symmetry} that $F_n(q,t)$ and $G_n(q,t)$ are symmetric in $q$ and $t$. 
This also gives an expression of the usual Catalan polynomials in terms of the depth and dinv of depth statistics.
In Section~\ref{section.deutsch}, we relate the involution that interchanges depth and area used to prove the symmetry
in Section~\ref{section.symmetry} to an involution by Deutsch~\cite{Deutsch.1999}; this yields an easy proof
of the symmetry of the Tutte polynomials of the Catalan matroid~\cite{Ardila.2003}.
In Section~\ref{section.parking_functions}, we consider the setup of parking functions and
address a remark in~\cite{GHQR.2019}.

\subsection{Recursion for $F_{n}(q,t)$}
\label{section.recursion}

We begin by giving a recursion for $F_n(q,t)$.

\begin{proposition} 
\label{prop: recurrence}
We have $F_0(q,t)=1$ and for any $n \geqslant 1$
\[
	F_{n}(q,t) =  \sum_{k=1}^{n} q^{k-1}t^{n-k} F_{k-1}(q,t) F_{n-k}(q,t).
\]
\end{proposition}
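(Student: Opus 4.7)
The plan is to apply the first-return decomposition of Dyck paths. Every $\pi \in D_n$ factors uniquely as $\pi = N \pi_1 E \pi_2$, where $(k,k)$ is the first point at which $\pi$ returns to the diagonal after the origin; so $\pi_1 \in D_{k-1}$ sits inside the arch from $(0,0)$ to $(k,k)$ (shifted up by one unit), and $\pi_2 \in D_{n-k}$ runs from $(k,k)$ to $(n,n)$ (shifted by $(k,k)$). The recursion will be immediate from two statistic-level identities:
\[
\area(\pi) = \area(\pi_1) + \area(\pi_2) + (k-1) \quad \text{and} \quad \depth(\pi) = \depth(\pi_1) + \depth(\pi_2) + (n-k).
\]
These give $q^{\area(\pi)} t^{\depth(\pi)} = q^{k-1} t^{n-k} \, q^{\area(\pi_1)} t^{\depth(\pi_1)} \, q^{\area(\pi_2)} t^{\depth(\pi_2)}$; summing over $\pi \in D_n$ grouped by $k$ produces the stated formula.

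The area identity is routine. Using $a_i(\pi) = i - c_i - 1$ where $c_i$ is the column of the $i$-th North step, the row-$1$ contribution (the initial $N$) is $0$; rows $2, \ldots, k$ are $\pi_1$ shifted up and contribute $\area(\pi_1) + (k-1)$ after re-indexing by $j = i-1$; the remaining rows $k+1, \ldots, n$ are $\pi_2$ shifted and contribute $\area(\pi_2)$.

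For the depth identity, I first observe that the reading algorithm of Section~\ref{section.depth} visits each $N$ cell of $\pi$ exactly once (by the ``not seen before'' conditions in step (3), together with the stopping length $n$), so $\depth(\pi)$ equals the plain sum of labels over all $N$ cells. It therefore suffices to compare the labelings of $\pi$, $\pi_1$, and $\pi_2$ cell-by-cell. The core sub-claims are: (A) every $N$ cell in columns $0, \ldots, k-1$ of $\pi$ coming from $\pi_1$ receives the label it has in $\pi_1$'s own depth labeling; and (B) every $N$ cell in columns $k, \ldots, n-1$ of $\pi$ coming from $\pi_2$ receives its $\pi_2$-label plus one. Adding the initial $N$ cell (labelled $0$), the total label sum is $0 + \depth(\pi_1) + (\depth(\pi_2) + (n-k))$, where the extra $n-k$ reflects the $+1$ offset applied to each of the $n-k$ cells of $\pi_2$.

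Both (A) and (B) are proved by induction on the column index, the geometric input being that the Southwest trajectory in $\pi$ mirrors the Southwest trajectory in the corresponding sub-path (shifted). For (A), the SW path from the bottommost $N$ cell of column $j \in \{1, \ldots, k-1\}$ of $\pi$ reaches column $0$ at row $\geq 2$ (since $\pi_1$ stays above its diagonal and is shifted up by one in $\pi$), so it never sees the extra initial $N$ cell at row $1$; it hits precisely the cell that $\pi_1$'s SW path hits, shifted up by one, and by induction the label matches. For (B), the crucial base case $j = 0$ uses the no-intermediate-touch property of the first-return decomposition: the arch $N \pi_1 E$ meets the diagonal only at $(0,0)$ and $(k,k)$, so the SW path from $(k,k)$ travels unobstructed down the diagonal through $(k-1,k-1), \ldots, (1,1)$ (none of which are $N$ cells of $\pi$) to $(0,0)$, labelled $0$, thereby forcing the bottommost $N$ cell of column $k$ to be labelled $1$, matching $\pi_2$'s column-$0$ label $0$ plus one. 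The inductive step of (B) then mirrors (A), with a uniform $+1$ carried throughout. The main obstacle is this base case of (B): it is precisely where the $t^{n-k}$ factor of the recursion is generated, and it requires the geometric observation about the arch; once it is in hand, the $+1$ offset propagates cleanly through the rest of $\pi_2$'s region by induction, and the area and depth decompositions assemble into the claimed recursion.
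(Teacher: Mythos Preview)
Your proof is correct and follows essentially the same approach as the paper: both use the first-return decomposition $\pi = N\pi_1 E\pi_2$, observe that area picks up $k-1$ from the arch while depth picks up $n-k$ because the depth labels on the $\pi_2$ block are uniformly shifted by $+1$ (anchored by the Southwest walk from column $k$ landing on the initial cell labelled $0$), and the $\pi_1$ block keeps its labels unchanged. Your write-up supplies more geometric detail on the Southwest trajectories and makes explicit the useful observation that $\depth(\pi)$ is simply the sum of all depth labels (so the reading order is irrelevant for the recursion), but the underlying argument is the same.
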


\begin{proof}
Let
\[
	D_{n}(k) = \{ \pi \in D_{n} \mid \pi \text{ first touches the diagonal at } (k, k)\}.
\]
Let $f \colon D_{n}(k) \to D_{k-1} \times D_{n-k}$ be the classical bijection sending
\[
	\pi = \pi_{1} \pi_{2} \ldots \pi_{2n} \quad \mapsto \quad (\pi_{2} \ldots \pi_{2k-1}, \pi_{2k+1} \ldots \pi_{2n}).
\]
Let $f_{1}(\pi)$ and $f_{2}(\pi)$ be the first and second component of $f(\pi)$, respectively. Note that appending a North step to the 
beginning and an East step at the end of a Dyck path of semilength $m$ increases the area by $m$. As $\pi$ is obtained by concatenating 
$N$, $f_{1}(\pi)$, $E$, and $f_{2}(\pi)$, we have  $q^{\area(\pi)} = q^{k-1} q^{\area(f_{1}(\pi))}q^{\area(f_{2}(\pi))}$. Now consider the 
depth labelling of $\pi$. Observe that the labellings of all North steps after $\pi_{2k+1}$ can be uniquely determined by the labelling to 
the right of $\pi_{2k+1}$. Since the labelling to the right of the first North step is $0$ and $(k,k)$ is the first time $\pi$ touches the diagonal, 
we have that the labelling to the right $\pi_{2k+1}$ is $1$. However, looking at the corresponding depth labelling in $f_{2}(\pi)$, this value 
is a zero. Thus, to get from the depth labelling of $f_{2}(\pi)$ to the that of $\pi_{2k+1} \ldots \pi_{2n}$ in $\pi$, we must add $1$ to each 
of the $n-k$ labels. Additionally, from the definition of the depth labelling, we see that the portion of $\pi$ from $(0,1)$ to $(k-1, k)$ corresponding 
to $f_{1}(\pi)$ has the same depth labelling as $f_{1}(\pi)$. This gives us that $t^{\depth(\pi)} = t^{n-k} t^{\depth(f_{1}(\pi))}t^{\depth(f_{2}(\pi))}$. 
Therefore,
\begin{equation}
	\sum_{\pi \in D_{n}(k)} q^{\area(\pi)} t^{\depth(\pi)} = q^{k-1} t^{n-k} F_{k-1}(q, t) F_{n-k}(q, t).
\end{equation}
Summing over $k$ from $1$ to $n$ gives the desired result.
\end{proof}

The recursion in Proposition~\ref{prop: recurrence} relates the polynomials $F_n(q,t)$ to the $q,t$-Catalan polynomials in~\cite[Section 5]{IrvingRattan.2021} 
in terms of increasing/decreasing factorizations and to Hurwitz graphs~\cite{AdinRoichman.2014} since they satisfy the same recurrence. Note that
in~\cite{AdinRoichman.2014} the authors defined a statistics $\mathsf{bmaj}$ on Dyck paths, which corresponds to our depth statistics. However, 
$\mathsf{depth}$ and $\mathsf{bmaj}$ are defined in different ways. In particular, the depth sequence is a refinement of depth, which will be used in subsequent 
sections to define a duality. 

\subsection{Dual plane trees}
\label{section.dual}

We define two labellings of plane trees and an associated reading word to each labelling. 

\begin{definition}
The \defn{labelling $A$} of a plane tree $T$, denoted by $T_{A}$, is defined recursively by the following algorithm:
\renewcommand\labelenumi{(\arabic{enumi})}
\renewcommand\theenumi\labelenumi
	\begin{enumerate}
		\item Label the root as $0$.
		\item For any other vertex $v$, let $m$ be the labelling of its parent $w$. Label $v$ as $m+k-1$, where $v$ is the $k$-th leftmost child of $w$.
	\end{enumerate}
\end{definition}

\begin{definition}
\label{definition.reading A}
Let $T$ be a plane tree with $n+1$ vertices. The \defn{reading word of $T_{A}$}, denoted by $\readA(T)$, is given by the following algorithm:
\renewcommand\labelenumi{(\arabic{enumi})}
\renewcommand\theenumi\labelenumi
	\begin{enumerate}
		\item Start by setting $\readA(T)$ to be an empty vector. Append the labels of the children of the root in increasing order.
		\item If the length of $\readA(T)$ equals $n$, then output $\readA(T)$. Otherwise, consider the set of vertices whose labels have already 
		been added to $\readA(T)$ but whose children's labels have not been added. Find the vertex in this set with the largest label and at least 
		one child. Call this vertex $v$. Append the labels of all the children of $v$ in increasing order.
	\end{enumerate}
\end{definition}

Note that the definition of the reading word in Definition~\ref{definition.reading A} is well-defined. To show this, it suffices to explain why no two 
vertices with the same label will be considered by the definition at the same step. Let $v$ and $w$ be any two vertices that have the same label. 
If one is an ancestor of the other, then they would not be considered at the same point anywhere in the algorithm. Otherwise, consider the closest 
common ancestor of $v$ and $w$ and label it $x$. Let $v'$ (resp. $w'$) be the child of $x$ on the path from $v$ (resp. $w$) to $x$. As the label 
of $w'$ is strictly larger than that of $v'$, $w$ will be considered before $v'$ and thus before $v$ in the algorithm.

\begin{example}
The labelling $T_A$ of the tree $T$ in Figure~\ref{figure.plane tree S} is given in Figure~\ref{figure.TA}. The corresponding reading word is
$\readA(T) = (0,1,1,2,0,1,2,2,0)$.
\end{example}

\begin{figure}[t]
\begin{subfigure}[b]{.25\linewidth}
\scalebox{0.7}{
\begin{tikzpicture}  
  [scale=.9,auto=center, every node/.style={circle,fill=blue!20}]
    
  \node (a1) at (0,0) {$0$};  
  \node (a2) at (-1.5,-1.5)  {$0$};  
  \node (a3) at (1.5,-1.5) {$1$};  
  \node (b1) at (-2.5,-3) {$0$};
  \node (b2) at (-1.5,-3) {$1$};
  \node (b3) at (-0.5,-3) {$2$};
  \node (b4) at (0.5,-3) {$1$};
  \node (b5) at (2.5,-3) {$2$};
  \node (c1) at (-2.5,-4.5) {$0$};
  \node (c2) at (-0.5,-4.5) {$2$};
  
  \draw (a1) -- (a2); 
  \draw (a1) -- (a3);  
  \draw (a2) -- (b1);
  \draw (a2) -- (b2);
  \draw (a2) -- (b3);
  \draw (a3) -- (b4);
  \draw (a3) -- (b5);
  \draw (b1) -- (c1);
  \draw (b3) -- (c2);
\end{tikzpicture}
}
\caption{$T_A$
\label{figure.TA}
}
\end{subfigure}
\hspace{2cm}
\begin{subfigure}[b]{.25\linewidth}
\scalebox{0.7}{
\begin{tikzpicture}  
  [scale=.9,auto=center, every node/.style={circle,fill=blue!20}]
    
  \node (a1) at (0,0) {$-1$};  
  \node (a2) at (-1.5,-1.5)  {$0$};  
  \node (a3) at (1.5,-1.5) {$0$};  
  \node (b1) at (-2.5,-3) {$1$};
  \node (b2) at (-1.5,-3) {$1$};
  \node (b3) at (-0.5,-3) {$1$};
  \node (b4) at (0.5,-3) {$1$};
  \node (b5) at (2.5,-3) {$1$};
  \node (c1) at (-2.5,-4.5) {$2$};
  \node (c2) at (-0.5,-4.5) {$2$};
  
  \draw (a1) -- (a2); 
  \draw (a1) -- (a3);  
  \draw (a2) -- (b1);
  \draw (a2) -- (b2);
  \draw (a2) -- (b3);
  \draw (a3) -- (b4);
  \draw (a3) -- (b5);
  \draw (b1) -- (c1);
  \draw (b3) -- (c2);
\end{tikzpicture}
}
\caption{$T_D$
\label{figure.TD}
}
\end{subfigure}
    \caption{Plane tree labellings $T_A$ and $T_D$ of the plane tree in Figure~\ref{figure.plane tree S}.}
\end{figure}
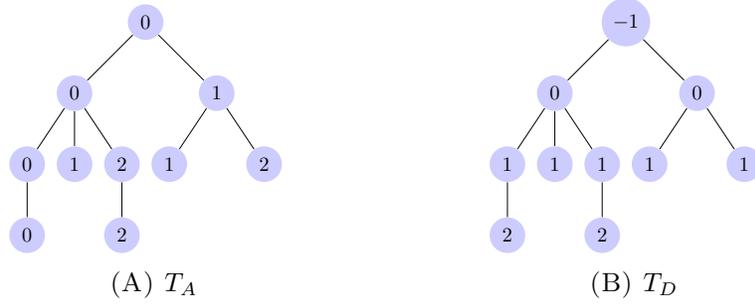

\begin{definition}
\label{definition.label D}
The \defn{labelling $D$} of a plane tree $T$, denoted by $T_{D}$, is defined by labelling a vertex $v$ by the number of edges in the path 
from $v$ to the root minus one.
\end{definition}

\begin{remark}
Note that the map $\lambda \colon \mathcal{P}_n \to \mathcal{L}_{n+1}$ of Definition~\ref{def: pf_tree_bijection} on parking functions
with car $i$ in row $i$ (or equivalently map $\eta$) sends the coinversions of labelled trees in the codomain to the labelling $D$ defined in Definition~\ref{definition.label D}.
\end{remark}

\begin{definition}
Let $T$ be a plane tree with $n+1$ vertices. The \defn{reading word of $T_{D}$}, denoted by $\readD(T)$, is defined by the following algorithm:
\renewcommand\labelenumi{(\arabic{enumi})}
\renewcommand\theenumi\labelenumi
	\begin{enumerate}
		\item Start by setting $\readD(T)$ to be an empty vector. Append the label of the root.
		\item If the length of $\readD(T)$ equals $n+1$, then remove the label corresponding to the root from $\readD(T)$ and output $\readD(T)$. 
		Otherwise consider the set of all vertices whose vertices have already been added to $\readD(T)$ but have at least one child whose label has 
		not been added. Find the vertex in this set with the largest label and call the vertex $v$. Attach to $\readD(T)$ the label of the leftmost child 
		of $v$ that has not already been added.
	\end{enumerate}
\end{definition}

This definition is also well-defined as vertices with the same labels will never be considered at the same time.

\begin{example}
The labelling $T_D$ of the tree $T$ in Figure~\ref{figure.plane tree S} is given in Figure~\ref{figure.TD}. The corresponding reading word is
$\readD(T) = (0,1,2,1,1,2,0,1,1)$.
\end{example}

\begin{definition}
Let $T$ be a plane tree. Let the $k$-th child of a vertex $v$ be the $k$-th leftmost child of $v$. We define the \defn{dual plane tree} of $T$, denoted by 
$T^{\mathsf{dual}}$, by the following algorithm:
\renewcommand\labelenumi{(\arabic{enumi})}
\renewcommand\theenumi\labelenumi
	\begin{enumerate}
		\item \textbf{Initialization:} Set $T^{\mathsf{dual}}$ to be a single vertex $u$ which we label as the root of $T^{\mathsf{dual}}$. If the root of $T$
		 has a child, then add a child to $u$ of $T^{\mathsf{dual}}$. Set this to be the $1$-st child of $u$ and associate this child with the $1$-st child of 
		 the root in $T$.
		\item \textbf{Determining if a non-root vertex $v$ in $T^{\mathsf{dual}}$ has a child:} Look at the associated vertex $v'$ of $v$ in the original 
		plane tree $T$. If $v'$ has a sibling to its right, then attach a child to $v$ which will be the $1$-st child of $v$. Associate the child of $v$ in 
		$T^{\mathsf{dual}}$ with the sibling directly right of $v'$ in $T$. If $v'$ has no sibling to its right, then $v$ has no children.
		\item \textbf{Determining if a vertex $v$ (including the root) in $T^{\mathsf{dual}}$ has a $k$-th child for $k >1$:} Let $w$ be the $(k-1)$-th 
		child of $v$. Look at the associated vertex $w'$ of $w$ in $T$. If $w'$ has a child, then attach a $k$-th child to $v$. Associate the $k$-th child of 
		$v$ to the $1$-st child of $w'$. If $w'$ has no children, then $v$ has no $k$-th child.
	\end{enumerate}
\end{definition}

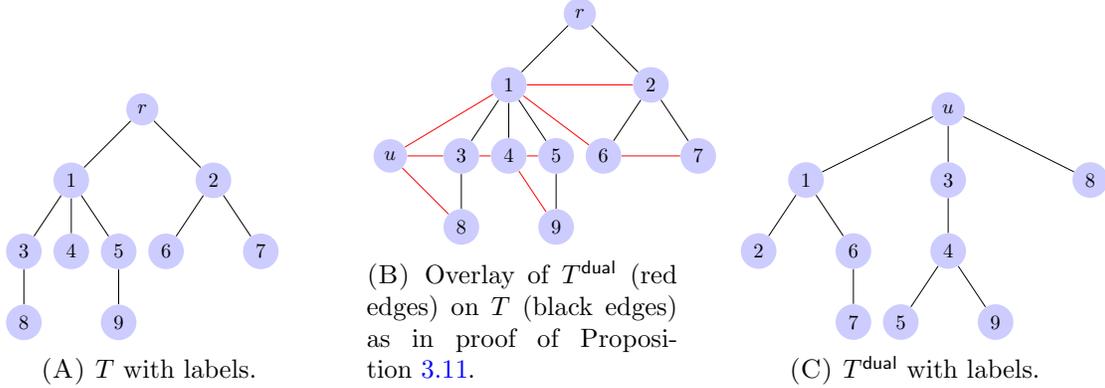
\begin{figure}[t]
\begin{subfigure}[b]{.25\linewidth}
\scalebox{0.7}{
\begin{tikzpicture}  
  [scale=.9,auto=center, every node/.style={circle,fill=blue!20}]
    
  \node (a1) at (0,0) {$r$};  
  \node (a2) at (-1.5,-1.5)  {$1$};  
  \node (a3) at (1.5,-1.5) {$2$};  
  \node (b1) at (-2.5,-3) {$3$};
  \node (b2) at (-1.5,-3) {$4$};
  \node (b3) at (-0.5,-3) {$5$};
  \node (b4) at (0.5,-3) {$6$};
  \node (b5) at (2.5,-3) {$7$};
  \node (c1) at (-2.5,-4.5) {$8$};
  \node (c2) at (-0.5,-4.5) {$9$};
  
  \draw (a1) -- (a2); 
  \draw (a1) -- (a3);  
  \draw (a2) -- (b1);
  \draw (a2) -- (b2);
  \draw (a2) -- (b3);
  \draw (a3) -- (b4);
  \draw (a3) -- (b5);
  \draw (b1) -- (c1);
  \draw (b3) -- (c2);
\end{tikzpicture}
}
\caption{$T$ with labels.
\label{figure.T}
}
\end{subfigure}
\hspace{0.5cm}
\begin{subfigure}[b]{.25\linewidth}
\scalebox{0.7}{
\begin{tikzpicture}  
  [scale=.9,auto=center, every node/.style={circle,fill=blue!20}]
   
  \node (u) at (-4,-3) {$u$}; 
  \node (a1) at (0,0) {$r$};  
  \node (a2) at (-1.5,-1.5)  {$1$};  
  \node (a3) at (1.5,-1.5) {$2$};  
  \node (b1) at (-2.5,-3) {$3$};
  \node (b2) at (-1.5,-3) {$4$};
  \node (b3) at (-0.5,-3) {$5$};
  \node (b4) at (0.5,-3) {$6$};
  \node (b5) at (2.5,-3) {$7$};
  \node (c1) at (-2.5,-4.5) {$8$};
  \node (c2) at (-0.5,-4.5) {$9$};
  
  \draw (a1) -- (a2); 
  \draw (a1) -- (a3);  
  \draw (a2) -- (b1);
  \draw (a2) -- (b2);
  \draw (a2) -- (b3);
  \draw (a3) -- (b4);
  \draw (a3) -- (b5);
  \draw (b1) -- (c1);
  \draw (b3) -- (c2);
  \draw[red] (u) -- (a2);
  \draw[red] (u) -- (b1);
  \draw[red] (u) -- (c1);
  \draw[red] (a2) -- (a3);
  \draw[red] (a2) -- (b4);
  \draw[red] (b1) -- (b2);
  \draw[red] (b2) -- (b3);
  \draw[red] (b2) -- (c2);
  \draw[red] (b4) -- (b5);
\end{tikzpicture}
}
\caption{Overlay of $T^{\mathsf{dual}}$ (red edges) on $T$ (black edges) as in proof of Proposition~\ref{prop: tree_involution}.
\label{figure.T overlay}
}
\end{subfigure}
\hspace{0.5cm}
\begin{subfigure}[b]{.3\linewidth}
\scalebox{0.7}{
\begin{tikzpicture}  
  [scale=.9,auto=center, every node/.style={circle,fill=blue!20}]
    
  \node (a1) at (0,0) {$u$};  
  \node (a2) at (-3,-1.5)  {$1$};  
  \node (a3) at (0,-1.5) {$3$};  
  \node (a4) at (3,-1.5) {$8$};
  \node (b1) at (-4,-3) {$2$};
  \node (b2) at (-2,-3) {$6$};
  \node (b3) at (0,-3) {$4$};
  \node (c1) at (-2,-4.5) {$7$};
  \node (c2) at (-1,-4.5) {$5$};
  \node (c3) at (1,-4.5) {$9$};
  
  \draw (a1) -- (a2); 
  \draw (a1) -- (a3);  
  \draw (a1) -- (a4);
  \draw (a2) -- (b1);
  \draw (a2) -- (b2);
  \draw (a3) -- (b3);
  \draw (b2) -- (c1);
  \draw (b3) -- (c2);
  \draw (b3) -- (c3);
\end{tikzpicture}
}
\caption{$T^{\mathsf{dual}}$ with labels.
\label{figure.T dual}
}
\end{subfigure}
    \caption{Construction of the dual plane tree $T^{\mathsf{dual}}$ of the plane $T$ in Figure~\ref{figure.plane tree S}.}
\end{figure}

\begin{example}
The dual plane tree $T^{\mathsf{dual}}$ of the plane tree $T$ in Figure~\ref{figure.plane tree S} is given in Figure~\ref{figure.T dual}.
Observe, by comparing with Figure~\ref{figure.plane tree}, that in this example $T^{\mathsf{dual}} = \eta \circ \sigma^{-1}(T)$. This will be proved 
in general in Corollary~\ref{corollary.dual}.
\end{example}

It is easy to see that $T^{\mathsf{dual}} \in \mathcal{T}_{n+1}$ by observing that every non-root node of $T$ is paired with a non-root node of 
$T^{\mathsf{dual}}$, there are no loops in $T^{\mathsf{dual}}$, and the children of every vertex are given a proper ordering. To show that the term dual 
plane tree is not a misnomer, we also prove that this operation is an involution.

\begin{proposition} 
\label{prop: tree_involution}
Let $T$ be a plane tree. Then $(T^{\mathsf{dual}})^{\mathsf{dual}} = T$.
\end{proposition}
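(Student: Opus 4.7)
The plan is to interpret the construction of $T^{\mathsf{dual}}$ as specifying a bijection $\phi$ from the non-root vertices of $T$ to the non-root vertices of $T^{\mathsf{dual}}$ (the ``associated vertex'' relation built into the definition), and to observe that $\phi$ exchanges the two local relations of a plane tree, namely ``first child'' and ``right sibling''. Iterating the dual construction then performs this exchange a second time, returning to $T$.

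First I verify that $\phi$ is a well-defined bijection from the $n$ non-root vertices of $T$ to the $n$ non-root vertices of $T^{\mathsf{dual}}$. This is an induction on construction order: step (1) associates the first child of $u$ with the first child $c_1$ of the root of $T$, and steps (2)--(3) extend the association to each subsequently created child of $T^{\mathsf{dual}}$. Conversely, every non-root vertex of $T$ is reachable from $c_1$ by a finite sequence of ``first child'' and ``right sibling'' moves in $T$, and each such move generates a corresponding vertex in $T^{\mathsf{dual}}$ under the construction, so $\phi$ is surjective (and hence bijective by a count).

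Next I read off two swap identities directly from the construction. For any non-root vertex $v$ of $T$, rule (2) applied at $\phi(v) \in T^{\mathsf{dual}}$ gives
\[
	\phi(v\text{'s right sibling in }T) = \phi(v)\text{'s first child in }T^{\mathsf{dual}},
\]
whenever the right sibling exists; and rule (3) applied at the parent of $\phi(v)$ in $T^{\mathsf{dual}}$, with $\phi(v)$ in the role of the $(k{-}1)$-th child, gives
\[
	\phi(v\text{'s first child in }T) = \phi(v)\text{'s right sibling in }T^{\mathsf{dual}},
\]
whenever the first child exists. Together with the initial identification $\phi(c_1) = $ first child of $u$, these are the precise sense in which $\phi$ swaps first-child with right-sibling.

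Applying the dual construction to $T^{\mathsf{dual}}$ produces an analogous bijection $\psi$ from non-root vertices of $T^{\mathsf{dual}}$ to non-root vertices of $(T^{\mathsf{dual}})^{\mathsf{dual}}$ satisfying the same two swap identities. The composition $\psi \circ \phi$ then preserves both the first-child and right-sibling relations on non-root vertices and sends the first child of the root of $T$ to the first child of the root of $(T^{\mathsf{dual}})^{\mathsf{dual}}$. Since a plane tree is uniquely determined by the identity of the first child of its root together with the first-child and right-sibling relations on its non-root vertices, this yields the desired isomorphism $(T^{\mathsf{dual}})^{\mathsf{dual}} = T$. I expect the main obstacle to be the careful bookkeeping needed to extract the swap identities uniformly, in particular verifying the boundary case where the parent of $\phi(v)$ in $T^{\mathsf{dual}}$ is the new root $u$ (one must match rule (1) against the $k=2$ instance of rule (3)); this is exactly the correspondence that the overlay in Figure~\ref{figure.T overlay} is designed to make transparent.
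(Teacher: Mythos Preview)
Your proof is correct and follows essentially the same approach as the paper: both arguments rest on the observation that the association built into the definition of $T^{\mathsf{dual}}$ exchanges the ``first child'' and ``right sibling'' relations, so applying $\mathsf{dual}$ twice restores $T$. The paper phrases this via the geometric overlay picture (drawing $T^{\mathsf{dual}}$ on top of $T$ and flipping), whereas you extract the two swap identities directly from rules (2) and (3); the content is the same.
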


\begin{proof}
Draw the plane tree $T$ in the canonical way with every vertex sitting above all of its descendants and the order of its children increasing from left to 
right. Next place the root of $T^{\mathsf{dual}}$ to the left of all vertices in $T$ and draw the plane tree $T^{\mathsf{dual}}$ on top of $T$ such that 
any vertex in $T^{\mathsf{dual}}$ is drawn on top of its corresponding vertex in $T$. Under this configuration all vertices in $T^{\mathsf{dual}}$ sit to 
the left of their descendants, and the order of their children increase from top to bottom. Since a vertex $v$ and its corresponding vertex $v'$ lie on top 
of each other in the specified configuration, we will abuse notation and refer to both as vertex $v$. Interchanging the position of the two trees (i.e. flipping 
the plane along the perpendicular bisector of the two root nodes), we clearly see that for a vertex $v$ in $T$ its first child corresponds to the sibling on 
the right of $v$ in $T^{\mathsf{dual}}$ and its $k$-th child corresponds to the first sibling of the $(k-1)$-th child of $v$ for $k > 1$. Thus, 
$(T^{\mathsf{dual}})^{\mathsf{dual}} = T$.
\end{proof}

The two reading words are related under the dual map on plane trees.

\begin{proposition} 
\label{prop: dual_property}
Let T be a plane tree. Then
\[
	\readD(T^{\mathsf{dual}}) = \readA(T) \quad \text{and} \quad \readA(T^{\mathsf{dual}}) = \readD(T).
\]
\end{proposition}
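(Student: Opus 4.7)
The plan is to compare the two reading algorithms via the duality bijection $\phi : T^{\mathsf{dual}} \setminus \{u\} \to T \setminus \{r\}$ sending each non-root vertex of $T^{\mathsf{dual}}$ to its associated vertex in $T$. First I would prove a label correspondence: for every non-root vertex $v \in T^{\mathsf{dual}}$, the $D$-label of $v$ equals the $A$-label of $\phi(v)$. This goes by induction in lexicographic order on (depth, position among siblings). If $v$ is the first child of its parent $p$, then $\phi(v)$ is the right sibling of $\phi(p)$ in $T$, so its position among siblings increases by one, giving $A(\phi(v)) = A(\phi(p)) + 1 = D(p) + 1 = D(v)$. If $v$ is the $k$-th child of $p$ for $k > 1$, and $w$ is the $(k-1)$-th child of $p$, then $\phi(v)$ is the first child of $\phi(w)$ in $T$, whence $A(\phi(v)) = A(\phi(w)) = D(w) = D(v)$, the last equality because $v$ and $w$ have the same depth in $T^{\mathsf{dual}}$.

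Next, I would identify $\readD(T^{\mathsf{dual}})$ with the preorder (leftmost-first depth-first) traversal of $T^{\mathsf{dual}}$, restricted to non-root vertices. Since the $D$-label is depth minus one, the rule ``pick the eligible vertex with largest label'' always selects the deepest eligible vertex. After a new child is appended, if it has children it becomes the deepest eligible vertex and we descend; otherwise we fall back to the nearest ancestor with unread children. This also confirms the claimed well-definedness: the eligible set at any step forms an anti-chain of ancestors with strictly distinct depths.

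The main work is then to show that $\readA(T)$ visits the vertices of $T$ in the same order as $\phi$ applied to the preorder of $T^{\mathsf{dual}}$. For this I would first record the following structural description, obtained by unwinding the recursive construction of the dual: for $v \in T^{\mathsf{dual}}$ with $v' = \phi(v)$ (or $v = u$ with $v' = r$), the children of $v$ in $T^{\mathsf{dual}}$ read left-to-right correspond via $\phi$ to the leftmost spine of $T$ starting from the right sibling of $v'$ (or from the first child of $r$ if $v = u$); equivalently, the $k$-th child of $v$ in $T^{\mathsf{dual}}$ is obtained from the right sibling of $v'$ in $T$ by following first-child edges $k-1$ times. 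Then I would induct on the step number $k$, showing that when $\readA$ selects a new active parent $v'$ (the read vertex with largest $A$-label whose children are all unread), this corresponds via $\phi$ to the unique deepest eligible vertex $v$ in $\readD$'s eligible set at that moment, and that $\readA$'s left-to-right batch of $v'$'s children in $T$ corresponds to $\readD$ consecutively descending the leftmost spine of a chain of right-siblings in $T$—which is precisely preorder's next movement inside $T^{\mathsf{dual}}$. Combining this order-matching with the label correspondence from Step 1 yields $\readA(T) = \readD(T^{\mathsf{dual}})$. The second equality then follows by applying the first to $T^{\mathsf{dual}}$ in place of $T$ and invoking $(T^{\mathsf{dual}})^{\mathsf{dual}} = T$ from Proposition~\ref{prop: tree_involution}.

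The main obstacle will be the order-matching step: $\readA$ proceeds in batches (all children of one vertex at once, ordered by $A$-label) while $\readD$ proceeds one vertex at a time (ordered by depth). Translating between the two requires carefully tracking how the ``leftmost spine of a right-sibling chain'' correspondence from the structural description interleaves with the greedy largest-label selections in both algorithms. In particular, one must verify that after $\readA$ finishes a batch of children of $v'$, the new ``largest $A$-label'' selection in $T$ indeed agrees, via $\phi$, with the deepest eligible vertex in $T^{\mathsf{dual}}$ at that corresponding moment of the preorder traversal.
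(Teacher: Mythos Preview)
Your overall strategy---establish a label correspondence via the duality bijection $\phi$, identify $\readD$ with preorder, then prove an order-matching statement---is sound and amounts to a modular reorganization of the paper's argument. The paper instead runs a single step-by-step induction on $k$, proving simultaneously that the $k$-th labels agree and that the $k$-th visited vertices correspond under duality (in the direction $\readD(T)=\readA(T^{\mathsf{dual}})$, deducing the other from Proposition~\ref{prop: tree_involution}). Your separate label lemma and your identification of $\readD$ with leftmost-first depth-first traversal are both correct and make the structure cleaner.

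However, the specific invariant you propose for the order-matching step does not hold. You assert that when $\readA(T)$ selects a new active parent $v'$, this $v'$ corresponds via $\phi$ to the deepest eligible vertex $v$ in $\readD(T^{\mathsf{dual}})$. Take $T$ to be a root with children $a,b$ where $a$ has a single child $c$. After $\readA$ reads the first batch $(a,b)$ it selects $v'=a$; but in $T^{\mathsf{dual}}$ (root $u$ with children $\phi^{-1}(a),\phi^{-1}(c)$ and $\phi^{-1}(a)$ having child $\phi^{-1}(b)$) the only eligible vertex at the matching moment is the root $u$, not $\phi^{-1}(a)$. The correct relationship is that the deepest eligible $v$ is the \emph{parent} in $T^{\mathsf{dual}}$ of $\phi^{-1}(v')$; equivalently, $\phi^{-1}(v')$ is the rightmost already-visited child of $v$, and the next leftmost spine in $T^{\mathsf{dual}}$ begins at its right sibling $\phi^{-1}(c_1)$. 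Your batch-to-spine picture is right, but the anchor point is off by one generation. The simplest repair is to abandon the batch-level invariant and carry the vertex-level invariant that the $k$-th vertex visited by $\readA(T)$ equals $\phi$ of the $k$-th vertex visited by $\readD(T^{\mathsf{dual}})$, splitting into the two cases ``next vertex is a child of the current one'' versus ``backtrack''; this is exactly the induction the paper performs.
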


\begin{proof}
It suffices to prove that $\readD(T) = \readA(T^{\mathsf{dual}})$ since this implies that 
$\readD(T^{\mathsf{dual}}) = \readA((T^{\mathsf{dual}})^{\mathsf{dual}})$ which equals $\readA(T)$ by Proposition~\ref{prop: tree_involution}. 

Let $\readD(T) = (r_{1}, r_{2}, \ldots, r_{n})$ and $\readA(T^{\mathsf{dual}}) = (s'_{1}, s'_{2}, \ldots, s'_{n})$. Let $v_{i}$ be the vertex in $T$ that 
has label $r_{i}$. Similarly, let $w_{i}$ be the vertex in $T^{\mathsf{dual}}$ that has label $s'_{i}$. We will prove by induction that 
$(r_{1}, r_{2}, \ldots, r_{k}) = (s'_{1}, s'_{2}, \ldots, s'_{k})$ and that $w_{k}$ corresponds to $v_{k}$ under $\mathsf{dual}$ for $1\leqslant k \leqslant n$. 
We have that both $w_{1}$ and $v_{1}$ are the leftmost child of their respective root nodes and the labelling of each is equal to zero. By the definition of 
$T^{\mathsf{dual}}$, we have $w_{1}$ corresponds to $v_{1}$. Assume that $(r_{1}, r_{2}, \ldots, r_{k}) = (s'_{1}, s'_{2}, \ldots, s'_{k})$ and that $w_{k}$ 
corresponds to $v_{k}$. If $v_{k+1}$ is a child of $v_{k}$, then $r_{k+1} = r_{k}+1$. Note that by definition of $T^{\mathsf{dual}}$, $w_{k}$ must have a sibling 
to its right. This implies that $w_{k+1}$ is the sibling directly right of $w_{k}$ and $w_{k+1}$ corresponds to $v_{k+1}$. We have 
$r_{k+1} = r_{k}+1 = s'_{k}+1 = s'_{k+1}$. 
If $v_{k+1}$ is not a child of $v_{k}$, then $v_{k+1}$ is the leftmost unvisited child of $y$, where $y=v_{i}$ for some $1\leqslant i < k$ and $y$ has the largest 
label out of all parents with unvisited children. Note as $v_{k}$ does not have any children, $w_{k}$ has no siblings to its right. Thus, to find $w_{k+1}$ 
we look for the leftmost child of the vertex $x$, where $x =w_{\ell}$ for some $1\leqslant \ell \leqslant k$ and $x$ has the largest label out of all parents with 
unvisited children. The condition that $x$ has unvisited children in $T^{\mathsf{dual}}$ implies that the parent of its corresponding vertex $x' = v_{\ell}$ in $T$ has 
an unvisited child. Thus the parent of $x'$ either is $y$ or has label smaller than $y$. If it has a label smaller than $y$ then by the definition of 
$T^{\mathsf{dual}}$ and our inductive hypothesis, there exists $v_{j}$ with $1 \leqslant 1 \leqslant k$ that has unvisited children and label strictly greater than 
$x$ which is a contradiction. Therefore $x'$ is the rightmost visited child of $y$ and the leftmost child of $x$ corresponds to the sibling to the right of $x'$. 
This implies that $w_{k+1}$ corresponds with $v_{k+1}$ and $w_{k+1} = w_{\ell} = v_{\ell} = v_{k+1}$.
\end{proof}

\subsection{Symmetry of $F_n(q,t)$ and $G_n(q,t)$}
\label{section.symmetry}

In this section, we prove the symmetry of the polynomials $F_n(q,t)$ and $G_n(q,t)$. We do so by defining an involution on Dyck paths using the
Stanley and Haglund--Loehr maps $\sigma$ and $\eta$, which switches the area and depth statistics. We begin by relating the area and depth sequences
under the Stanley and Haglund--Loehr maps using the two reading words above. Recall that $a_i(\pi)$ and $d_i(\pi)$ are defined in 
Sections~\ref{section.dyck_paths} and~\ref{section.depth}.

\begin{proposition} \label{prop: sigma_property}
Let $\pi \in D_{n}$. Then
\[
\begin{split}
	\readD(\sigma(\pi)) &= (a_{1}(\pi), a_{2}(\pi), \ldots, a_{n}(\pi)),\\
	\readA(\sigma(\pi)) &= (d_{1}(\pi), d_{2}(\pi), \ldots, d_{n}(\pi)).
\end{split}
\]
\end{proposition}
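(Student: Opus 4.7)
The plan is to prove both identities in parallel by associating, to the $i$-th $N$-step of $\pi$, the vertex $v_i$ of $\sigma(\pi)$ created by that step in the Stanley traversal and the cell $c_i$ lying directly right of the $N$-step in $\pi$. Each identity splits into (a) an agreement of labels, matching the appropriate label (either $D$ or $A$) of $v_i$ with the corresponding statistic of $c_i$, and (b) an agreement of reading orders, matching the order in which $\readD$ or $\readA$ emits the vertices of $\sigma(\pi)$ with the order in which $a_i(\pi)$ or $d_i(\pi)$ is read off.

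For $\readD(\sigma(\pi)) = (a_1,\ldots,a_n)$, ingredient (a) is immediate: during the Stanley traversal the current tree-depth equals the excess of $N$-steps over $E$-steps, so $v_i$ has tree-depth $i - x_i$ where $x_i$ is the number of $E$-steps preceding the $i$-th $N$-step; since the $D$-label is tree-depth minus one, the $D$-label of $v_i$ is $i - x_i - 1 = a_i(\pi)$. For ingredient (b) I would induct on $i$, showing $\readD$ picks $v_{i+1}$ next. Let $k$ be the number of $E$-steps between the $i$-th and $(i{+}1)$-th $N$-step and let $w$ be the ancestor of $v_i$ at tree-depth $d(v_i) - k$, so that the $(i{+}1)$-th $N$-step creates $v_{i+1}$ as a new child of $w$. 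The visited vertices with unvisited children are precisely the live path vertices whose current subtree is not the last, together with $v_i$ itself when $k=0$. Any strict ancestor of $v_i$ lying strictly below $w$ has its exit $E$-step executed among the $k$ intervening $E$-steps, so it has no remaining children. Hence $w$ is the deepest (largest $D$-label) such vertex, and because Stanley builds siblings left to right, $v_{i+1}$ is $w$'s leftmost unvisited child, exactly as $\readD$ dictates.

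For $\readA(\sigma(\pi)) = (d_1,\ldots,d_n)$, the label agreement is the main technical point. A maximal run of consecutive $N$-steps at a common $x$-coordinate yields a chain of vertices in $\sigma(\pi)$ each being the leftmost child of the previous, so they share a common $A$-label, matching the fact that all cells in a common column share a common depth label. I would induct on the column index: when a new column $j > 1$ opens, the top vertex of its chain is the $k$-th child (with $k \ge 2$) of its parent $p$, with $A$-label equal to $A\text{-label}(q) + 1$, where $q$ is the $(k{-}1)$-st child of $p$. To match this with the depth labelling I would show the $SW$-ray from the bottommost cell of column $j$ first meets $c_{i'}$ (the cell of $q$) before any other labelled cell. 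All cells on the ray share a constant value of $\mathrm{col} - \mathrm{row}$, and a case analysis rules out stray labelled cells: $N$-steps during $q$'s subtree exploration have $y - x$ strictly exceeding the tree-depth of $p$ (so they lie on a different diagonal), $N$-steps before $q$'s creation have $x$-coordinate at most $x_{i'}$ (so their cells sit at or past $c_{i'}$ on the ray), and subsequent $N$-steps lie in column $j$ itself.

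For the order agreement in the $\readA$ case, I would identify cell-traversal moves with tree moves: the cell directly above $c_v$ is the cell of $v$'s first child in $\sigma(\pi)$, and the next labelled cell reached by travelling Northeast from $c_w$ is the cell of $w$'s next sibling, since siblings share a tree-depth and therefore sit on a common $NE$-diagonal, while the intermediate positions on that diagonal belong to $w$'s subtree where the tree-depth is strictly larger and hence those positions are uninhabited. Coupled with the label agreement, the ``maximum-label visited cell with unvisited labelled cell above'' rule in the depth-sequence traversal matches the ``maximum-$A$-label visited vertex with unvisited children'' rule of $\readA$. The main obstacle throughout is the $SW$-ray analysis in the $\readA$ label agreement; it relies on combining the Dyck-path constraint $y \ge x$, the tree-depth lower bound during the exploration of $q$'s subtree, and the non-decreasing behaviour of the $x$-coordinate along $\pi$ to exclude intermediate labelled cells.
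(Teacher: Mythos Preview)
Your proposal is correct and lands on the same underlying correspondences as the paper, but you organize the argument differently, and the difference is worth noting.

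For $\readD(\sigma(\pi))=(a_1,\ldots,a_n)$ your treatment is essentially identical to the paper's: both induct on the position $k$, use that the $D$-label of the $k$-th created vertex is $a_k(\pi)$, and argue that after $k$ intervening East steps the deepest still-open ancestor $w$ is exactly the vertex $\readD$ selects, with $v_{k+1}$ its leftmost unvisited child.

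For $\readA(\sigma(\pi))=(d_1,\ldots,d_n)$ the paper runs a \emph{single} induction on the reading-word position~$k$, simultaneously tracking which North step corresponds to the $k$-th emitted vertex and checking the label; the two cases are ``$v_{k+1}$ is a sibling of $v_k$'' versus ``$v_{k+1}$ is a leftmost child of the maximal-label open vertex''. You instead decouple the problem into (a) a label agreement proved by induction on the \emph{column index}, via the $SW$-ray argument showing that the previous sibling $q$ is the nearest labelled cell on that diagonal (because everything created strictly between $q$ and the new column lies in $q$'s subtree and hence on a strictly higher diagonal), and (b) an order agreement matching the two traversal rules. Both routes are valid; yours makes the geometric picture more explicit (and in particular isolates the key fact that consecutive siblings are the nearest same-diagonal labelled cells), while the paper's joint induction avoids having to separately reconcile the ``append all children at once'' structure of $\readA$ with the ``one cell at a time, NE then fallback'' structure of the depth-sequence walk. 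Your order-matching sketch is correct once one observes that in $\readA$ a visited vertex has unvisited children iff its first child is unvisited (children are appended in a single batch), so ``unvisited labelled cell directly above'' and ``unvisited children'' coincide under the cell/vertex bijection.
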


\begin{proof}
Let $(r_{1}, r_{2}, \ldots, r_{n}) = \readD(\sigma(\pi)) $. We use induction on $1\leqslant k \leqslant n$ to prove that 
\[
	(r_{1}, r_{2}, \ldots, r_{k}) = (a_{1}(\pi), a_{2}(\pi), \ldots, a_{k}(\pi))
\]
and the $k$-th vertex (excluding the root) added in the creation of $\sigma(\pi)$ corresponds to the vertex with label $r_{k}$. Observe that 
$r_{1}$ corresponds to the label of the leftmost child of the root node. Note that this is the first node added in $\sigma(\pi)$. Thus, $r_{1} = 0 = a_{1}(\pi)$. 
Assume that  $(r_{1}, r_{2}, \ldots, r_{k}) = (a_{1}(\pi), a_{2}(\pi), \ldots, a_{k}(\pi))$ and $r_{k}$ is the label of the $k$-th vertex $v_{k}$ added in the 
creation of $\sigma(\pi)$ excluding the root. If the $(k+1)$-th vertex $v_{k+1}$ added to $\sigma(\pi)$ is a child of $v_{k}$, then in the Dyck path 
$a_{k+1}(\pi) = a_{k}(\pi)+1$. Since the label of $v_{k}$ was added last to $(r_{1}, \ldots, r_{k})$, we know that in the previous step the parent 
of $v_{k}$ had the largest label out of all parents containing a child whose label was not already appended to the reading word. As $v_{k}$ has a 
larger label than its parent and contains a child $v_{k+1}$, $r_{k+1}$ is the label of the leftmost available child of $v_{k}$ which would coincide with 
$v_{k+1}$. We have the label of $v_{k+1}$ is one more than $v_{k}$ giving us $r_{k+1} = r_{k}+1 = a_{k}(\pi)+1 = a_{k+1}(\pi)$. Now assume that 
$v_{k+1}$ is not a child of $v_{k}$. In the Dyck path, this corresponds to a block of East steps after the $k$-th North step. Let $\ell$ denote the size 
of this block of East steps. We see that $a_{k+1}(\pi) = a_{k}(\pi)+\ell-1$. In the tree, this corresponds to going $\ell$ vertices towards the root along 
the path from $v_{k}$ to the root and attaching a new vertex $v_{k+1}$ to this vertex $w$. Note that this implies that $v_{k}$ and all vertices strictly 
between $v_{k}$ and $w$ do not have any additional children that have not already been added. This implies that $w$ has the largest label 
of all vertices that contain a child whose label has not been appended to the reading word. Thus, $r_{k+1}$ corresponds to the label of $v_{k+1}$ 
which is one more than the label of $w$. Thus, $r_{k+1} = r_{k}- \ell +1 = a_{k}(\pi)- \ell +1 = a_{k+1}(\pi)$. By induction, we obtain
$\readD(\sigma(\pi)) = (a_{1}(\pi), a_{2}(\pi), \ldots, a_{n}(\pi))$.

Let $(s_{1}, s_{2}, \ldots, s_{n}) =  \readA(\sigma(\pi)) $. Similar to the previous paragraph, we use induction on $1\leqslant k \leqslant n$ to prove that
\[
	(s_{1}, s_{2}, \ldots, s_{k}) = (d_{1}(\pi), d_{2}(\pi), \ldots, d_{k}(\pi))
\]
and the North step corresponding to $d_{k}(\pi)$ created the vertex $v$ corresponding to the label $s_{k}$ in $\sigma(\pi)$. We have that $d_{1}(\pi) = 0$ 
corresponds to the first North step which created the leftmost child of the root node. Note that $s_{1} = 0$ and also corresponds to the leftmost vertex of 
the root node. Assume that $(s_{1}, s_{2}, \ldots, s_{k}) = (d_{1}(\pi), d_{2}(\pi), \ldots, d_{k}(\pi))$ and the North step corresponding to $d_{k}(\pi)$ in the 
Dyck path created the vertex $v_{k}$ corresponding to the label $s_{k}$ in $\sigma(\pi)$. If the vertex $v_{k+1}$ corresponding to $s_{k+1}$ is a sibling 
of $v_{k}$ then $s_{k+1} = s_{k}+1$. By the previous paragraph, siblings correspond to North steps on the same diagonal. Note that no other North step 
can lie between the diagonal connecting the North step $N_{k}$ of $v_{k}$ and the North step $N_{k+1}$ of $v_{k+1}$ (keep in mind that $N_{k}$ does not 
mean the $k$-th North step of $\pi$). Also, $N_{k+1}$ needs to be the bottommost North step in its column, otherwise $v_{k}$ and $v_{k+1}$ would not 
be siblings in $\sigma(\pi)$. Since the depth label $d_{k}(\pi)$ corresponds to $N_{k}$, we have that $d_{k+1}(\pi)$ is the labelling of $N_{k+1}$. Thus, 
$d_{k+1}(\pi) = d_{k}(\pi)+1 = s_{k}+1 = s_{k+1}$. Assume that the vertex $v_{k+1}$ corresponding to $s_{k+1}$ is not a sibling of $v_{k}$. This implies 
that $v_{k+1}$ is the leftmost child of the vertex $w$ with the largest labelling in $(s_{1}, s_{2}, \ldots, s_{k})$ whose children's labels have not been 
added yet. Looking at the North step $N_{k}$ corresponding to $d_{k}$, we have that the first North step reached by traveling northeast from $N_{k}$ 
is not in the bottom of its column. Thus to find the North step corresponding to $d_{k+1}(\pi)$, we must find the largest labeled cell visited by 
$(d_{1}(\pi), d_{2}(\pi), \ldots, d_{k}(\pi))$ that has a labelled cell directly above which has not been visited. Note that having a labeled cell directly 
above corresponds to having a child. Thus the North step corresponding to $d_{k+1}(\pi)$ is the same as the North step corresponding to 
$v_{k+1}$ and is one cell directly above the North step corresponding to $w$. Note that the labelling of $w$ is $s_{i}$ and the labelling of its 
corresponding North step is $d_{i}(\pi)$ for some $1\leqslant i \leqslant n$. As $v_{k+1}$ is the leftmost child of $w$, we have $s_{k+1}$ = $s_{i}$. 
Similarly, as $d_{k+1}(\pi)$ lies in the same column as $d_{i}(\pi)$, we have $d_{k+1}(\pi) = d_{i}(\pi)$. By induction $d_{i}(\pi) = s_{i}$, 
implying $d_{k+1}(\pi) = s_{k+1}$. By induction we obtain $\readA(\sigma(\pi)) = (d_{1}(\pi), d_{2}(\pi), \ldots, d_{n}(\pi))$.
\end{proof}

\begin{proposition} \label{prop: eta_property}
Let $\pi \in D_{n}$. Then
\[
\begin{split}
	\readA(\eta(\pi)) &= (a_{1}(\pi), a_{2}(\pi), \ldots, a_{n}(\pi)),\\
	\readD(\eta(\pi))& = (d_{1}(\pi), d_{2}(\pi), \ldots, d_{n}(\pi)).
\end{split}
\]
\end{proposition}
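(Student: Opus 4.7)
The plan is to prove both equalities by a joint induction on $k$ (the position in the reading word), in direct parallel to Proposition \ref{prop: sigma_property}. Under $\eta$ there is a natural bijection between non-root vertices of $\eta(\pi)$ and cells of $\pi$ directly right of a North step; write $v_i$ for the vertex associated with the cell of the $i$-th North step of $\pi$.

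First I will establish the label equalities: the label $A$ of $v_i$ equals $a_i(\pi)$, and the label $D$ of $v_i$ equals the depth label of its cell in $\pi$. Both follow by induction on the NE-diagonal recursion defining $\eta$. For labelling $A$, if a vertex $v$ has its associated cell on the line $y=x+\ell$ (so its label $A$ equals $\ell$, matching the area value of its row), then its $k$-th leftmost child is associated with a cell on the same diagonal shifted up by $k-1$, whose area value is $\ell+k-1$; this matches the rule that the $k$-th leftmost child carries label parent$\,+\,k-1$. For labelling $D$, each NE-diagonal step down the tree increments both the tree depth and the depth label of the associated cell by exactly one.

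For the ordering, the base case is immediate since the leftmost child of the root in $\eta(\pi)$ is $v_1$. For the inductive step, consecutive North steps in the same column of $\pi$ form a sibling group in $\eta(\pi)$ and are output by $\readA$ left-to-right in a single batch; transitioning to a new column corresponds to $\readA$ selecting the visited vertex of maximum label having unvisited children. The principal obstacle is the geometric claim that this vertex is precisely the parent of the new column's sibling group, with label $a_{k+1}(\pi)$. To prove this I will show that any competing visited vertex $v$, whose children lie in some still-later column, must have label strictly less than $a_{k+1}(\pi)$: the NE diagonal from $v$'s associated cell must bypass the labelled cells of the intervening new column, and combined with the constraint that $\pi$ stays weakly above $y=x$, this forces the strict inequality.

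The reading $D$ case proceeds in parallel: the rule ``append the leftmost unvisited child of the max-label visited vertex'' in $\readD$ mirrors the depth-sequence algorithm's alternation between NE-traveling until the next labelled cell is reached and backing up to the highest-labelled visited cell with an unvisited labelled cell directly above. Combining the order correspondence with the label equality for $D$ established above yields $\readD(\eta(\pi)) = (d_1(\pi),\ldots,d_n(\pi))$.
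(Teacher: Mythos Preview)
Your proposal is correct and structurally close to the paper's proof, though organized differently and in one respect more complete. The paper disposes of the first equality by citing \cite{HL.2005}, whereas you prove it directly; your geometric argument for the ordering (that any competing visited vertex with children in a later column must have strictly smaller $A$-label than $a_{k+1}(\pi)$) is the right one and can be made precise exactly as you indicate, using that the area sequence increases by at most one so the first return to level $a_j$ after row $j$ is always the bottom of its column. This makes your treatment of $\readA$ self-contained where the paper's is not.

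For $\readD$ your outline matches the paper's induction, but the word ``mirrors'' hides the one point that actually needs checking. The depth-sequence backup rule selects the visited \emph{cell} of maximal depth label having an unvisited labelled cell directly above; under $\eta$, ``directly above'' means \emph{sibling}, not child, so this is not literally the $\readD$ rule. The bridge is that a visited cell $b$ with an unvisited cell above corresponds to a visited \emph{parent} $w$ (of $v_b$) with an unvisited child, and the depth label of $b$ equals the $D$-label of $w$ plus one, so maximizing over $b$ and maximizing over $w$ yield the same parent and hence the same next vertex. You should also note (as the paper does implicitly) that every visited vertex with an unvisited child necessarily already has a visited child, since immediately after a vertex is visited in $\readD$ its leftmost child is visited next; this ensures the two candidate sets really coincide. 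Once you state this translation explicitly, your modular layout (labels first, then ordering) is arguably cleaner than the paper's combined induction.
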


\begin{proof}
Let $x$ be the parking function obtained by labelling the North step in the $i$-th row by $i$. Then ~\cite{HL.2005} have showed the first equality. 

We prove the second equality by induction. Let $(r_{1}, r_{2}, \ldots, r_{n}) = \readD(\eta(\pi))$. We prove that 
$(r_{1}, r_{2}, \ldots, r_{k}) = (d_{1}(\pi), d_{2}(\pi), \ldots, d_{k}(\pi))$ for $1 \leqslant k \leqslant n$ and the North step corresponding to $d_{k}(\pi)$ 
created the vertex $v$ corresponding to the label $r_{k}$ in $\eta(\pi)$. We have that $d_{1}(\pi) = 0$ and it lies to the right of the first North step. 
The first North step under the map $\eta$ creates the leftmost child of the root which is precisely the vertex whose label is $r_{1} = 0$. Assume that  
$(r_{1}, r_{2}, \ldots, r_{k}) = (d_{1}(\pi), d_{2}(\pi), \ldots, d_{k}(\pi))$ and the North step corresponding to $d_{k}(\pi)$ created the vertex $v_{k}$ 
whose label is $r_{k}$. Let $v_{k+1}$ be the vertex whose label is $r_{k+1}$. Also define $N_{k}$ and $N_{k+1}$ to be the North steps that 
created $v_{k}$ and $v_{k+1}$, respectively. Assume that the vertex $v_{k+1}$ is a child of $v_{k}$. As $v_{k+1}$ is a child of $v_{k}$, we obtain
$r_{k+1}  = r_{k}+1$. By the definition of $\readD$, we have that $v_{k+1}$ is the leftmost child of $v_{k}$. This implies that their A label is the same.
Since $\readA(\eta(\pi)) = (a_{1}(\pi), a_{2}(\pi), \ldots, a_{n}(\pi))$, we have the North steps that created $v_{k}$ and $v_{k+1}$ under $\eta$ lie 
on the same diagonal. By the definition of $\eta$, we have that $N_{k+1}$ must be at the bottom of its column and no other North step lies between 
the $N_{k}$ and $N_{k+1}$. Thus $d_{k+1}(\pi)$ is the depth labelling of $N_{k+1}$ which satisfies $d_{k+1}(\pi) = d_{k}(\pi) +1 = r_{k}+1$. Assume 
now that $v_{k+1}$ is not a child of $v_{k}$ which implies by the definition of $\readD$ that $v_{k}$ does not have any children. Consider the subset 
$S'$ of $S = \{v_{1}, v_{2}, \dots, v_{k} \}$ containing all vertices with a child that is not also in $S$. Let $w$ be the vertex in $S'$ with the largest label. 
We have that $v_{k+1}$ is the leftmost child of $w$ that is not in $S$.  As $v_{k}$ does not have a child, the first North step attained by traveling Northeast 
from $N_{k}$ is not at the bottom of its column or does not exist. Thus to find $N_{k+1}$, we must find the largest labeled cell visited by 
$(d_{1}(\pi), d_{2}(\pi), \ldots, d_{k}(\pi))$ that has a labelled cell directly above which has not been visited. Note that having two North 
steps consecutively corresponds to them being siblings under $\eta$. Additionally, observe that the vertex in $S$ with the largest label 
out of vertices in $S$ containing a sibling not in $S$ is a child of $w$. Thus $v_{k+1}$ and the node created by $N_{k+1}$ are the same. All 
the children of $w$ have the same $D$ labelling, and depth labelings in the same column of $\pi$ are equal. Paired with the inductive hypothesis, 
this implies $r_{k+1} = d_{k+1}$.
\end{proof}

We are now ready to show that combining the Stanley and Haglund--Loehr maps gives an involution that interchanges area and depth.

\begin{proposition} \label{prop: main_bijection}
Let $\omega = \sigma ^{-1} \circ \eta \colon D_{n} \to D_{n}$. Then $\omega$ is an involution 
which interchanges the depth and area sequence.
\end{proposition}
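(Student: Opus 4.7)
The plan is to show the two assertions separately, using the reading-word calculations in Propositions~\ref{prop: sigma_property} and~\ref{prop: eta_property} together with the duality result in Proposition~\ref{prop: dual_property}.

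First, I would verify that $\omega$ interchanges area and depth sequences, which is essentially a chain of identifications of reading words. Set $\pi' = \omega(\pi) = \sigma^{-1}(\eta(\pi))$, so that $\sigma(\pi') = \eta(\pi)$. By Proposition~\ref{prop: sigma_property} applied to $\pi'$, the area sequence of $\pi'$ equals $\readD(\sigma(\pi')) = \readD(\eta(\pi))$, which by Proposition~\ref{prop: eta_property} equals the depth sequence of $\pi$. The same argument with $\readA$ in place of $\readD$ shows that the depth sequence of $\pi'$ equals the area sequence of $\pi$. So $\omega$ swaps the two sequences.

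Second, I would show $\omega$ is an involution by factoring it through the dual operation on plane trees. The key observation is that a plane tree $T \in \mathcal{T}_{n+1}$ is determined by $\readD(T)$: indeed, for $T = \sigma(\pi)$, $\readD(T)$ is the area sequence of $\pi$ (Proposition~\ref{prop: sigma_property}), which determines $\pi$ and hence $T$ via the bijection $\sigma$. Comparing Propositions~\ref{prop: sigma_property} and~\ref{prop: eta_property}, the trees $\sigma(\pi)$ and $\eta(\pi)$ have the roles of $\readA$ and $\readD$ exchanged, i.e.\ $\readD(\eta(\pi)) = \readA(\sigma(\pi))$. By Proposition~\ref{prop: dual_property}, this is also the relation satisfied by $\sigma(\pi)$ and $\sigma(\pi)^{\mathsf{dual}}$. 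By the uniqueness just noted, we conclude
\[
    \eta(\pi) = \sigma(\pi)^{\mathsf{dual}}, \qquad \text{equivalently}\qquad \sigma \circ \omega = \mathsf{dual} \circ \sigma.
\]
(This is the content promised in Corollary~\ref{corollary.dual}.) Then
\[
    \sigma(\omega^2(\pi)) = \mathsf{dual}(\sigma(\omega(\pi))) = \mathsf{dual}(\mathsf{dual}(\sigma(\pi))) = \sigma(\pi)
\]
by Proposition~\ref{prop: tree_involution}, and applying $\sigma^{-1}$ gives $\omega^2 = \mathrm{id}$.

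The only piece requiring any care is the uniqueness statement ``$T$ is determined by $\readD(T)$,'' since the rest is a short algebraic manipulation once the identifications are in place. This uniqueness is easy in either of two ways: argue directly from the algorithm defining $\readD$, or use that $\readD \circ \sigma$ sends Dyck paths to their area sequences (which are distinct) and that $\sigma$ is a bijection, so $\readD$ itself is injective on $\mathcal{T}_{n+1}$. I expect no serious obstacle; the substance of the proof has already been absorbed into the earlier propositions on reading words and duality.
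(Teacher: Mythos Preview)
Your argument for the sequence swap is exactly the paper's: chain $\readD$ and $\readA$ through $\sigma(\pi')=\eta(\pi)$ via Propositions~\ref{prop: sigma_property} and~\ref{prop: eta_property}. For the involution, however, the paper takes a shorter route than you do. Having established that $\omega$ swaps the area and depth sequences, one simply applies this twice: the area sequence of $\omega^{2}(\pi)$ equals the depth sequence of $\omega(\pi)$, which in turn equals the area sequence of $\pi$; since a Dyck path is determined by its area sequence, $\omega^{2}(\pi)=\pi$. No appeal to Propositions~\ref{prop: dual_property} or~\ref{prop: tree_involution} is needed.

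Your detour through duality is correct and self-contained (you rightly note that injectivity of $\readD$ follows from Proposition~\ref{prop: sigma_property} and the bijectivity of $\sigma$), but it is more work than necessary here. Its payoff is that you obtain $\eta=\mathsf{dual}\circ\sigma$ along the way, which the paper derives only afterward as Corollary~\ref{corollary.dual} (via Proposition~\ref{prop: main_bijection} and Corollary~\ref{cor: alt_main_bijection}). So your route front-loads the tree-duality picture, while the paper's keeps the proof of Proposition~\ref{prop: main_bijection} minimal and postpones the duality identification to the corollaries.
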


\begin{proof}
By Propositions~\ref{prop: sigma_property} and ~\ref{prop: eta_property} we have that 
\[
\begin{split}
	(d_{1}(\omega(\pi)), d_{2}(\omega(\pi)), \ldots, d_{n}(\omega(\pi))) &= (a_{1}(\pi), a_{2}(\pi), \ldots, a_{n}(\pi)),\\
	(a_{1}(\omega(\pi)), a_{2}(\omega(\pi)), \ldots, a_{n}(\omega(\pi))) &= (d_{1}(\pi), d_{2}(\pi), \ldots, d_{n}(\pi)).
\end{split}
\]
Additionally, we have $(a_{1}(\omega^{2}(\pi)), a_{2}(\omega^{2}(\pi)), \ldots, a_{n}(\omega^{2}(\pi))) = (d_{1}(\omega(\pi)), d_{2}(\omega(\pi)), 
\ldots, d_{n}(\omega(\pi)))$ implying $(a_{1}(\omega^{2}(\pi)), a_{2}(\omega^{2}(\pi)), \ldots, a_{n}(\omega^{2}(\pi))) = (a_{1}(\pi), a_{2}(\pi), \ldots, a_{n}(\pi))$.
Since the area sequence uniquely determines a Dyck path, we have that $\omega$ is an involution. 
\end{proof}

\begin{figure}[t]
	\centering
	\begin{tikzpicture}[scale = .5]
	
	\draw[step=1.0, gray!60, thin] (0,0) grid (9,9);
	
	\draw[gray!60, thin] (0,0) -- (9,9);
	
	\draw[blue!60, line width=2pt] (0,0) -- (0,1) -- (0,2) -- (1,2) -- (1,3) -- (1,4) -- (2,4) -- (3,4) -- (4,4) -- (4,5) -- (4,6) -- (4,7) -- (5,7) -- (5,8) -- (6,8) -- (7,8) -- (8,8) -- (8,9) -- (9,9);
	
	\draw (0.5,0.5) node {$0$};
	\draw (0.5,1.5) node {$0$};
	\draw (1.5,2.5) node {$1$};
	\draw (1.5,3.5) node {$1$};
	\draw (4.5,4.5) node {$1$};
	\draw (4.5,5.5) node {$1$};
	\draw (4.5,6.5) node {$1$};
	\draw (5.5,7.5) node {$2$};
	\draw (8.5,8.5) node {$2$};

	\end{tikzpicture}
	\caption{$w(\pi)$ with $\pi$ as in Figure~\ref{figure.depth} with depth labelling.}
	\label{figure.dual}
\end{figure}
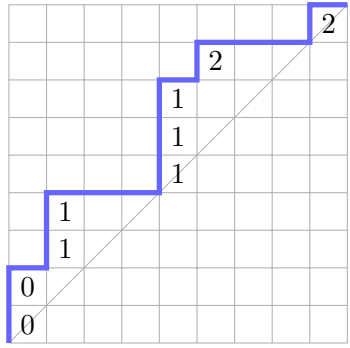

\begin{example}
Consider the Dyck path $\pi$ in Figure~\ref{figure.depth} with area and depth sequences (see also Example~\ref{example.depth})
\[
	a(\pi) = (0,1,2,1,1,2,0,1,1) \quad \text{and} \quad d(\pi) = (0,1,1,2,0,1,2,2,0).
\]
Then $\omega(\pi)$ is given in Figure~\ref{figure.dual} and it is easy to check that $a(\omega(\pi)) = d(\pi)$ and $d(\omega(\pi)) = a(\pi)$.
\end{example}

\begin{corollary}
 \label{cor: alt_main_bijection}
Let $\pi \in D_{n}$. Then $\omega(\pi) = \sigma^{-1}((\sigma(\pi))^{\mathsf{dual}}) = \eta^{-1}((\eta(\pi))^{\mathsf{dual}})$.
\end{corollary}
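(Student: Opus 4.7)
The plan is to reduce both equalities in the corollary to the single key identity
\[
    (\sigma(\pi))^{\mathsf{dual}} = \eta(\pi),
\]
and then deduce the rest by formal manipulation using the involution properties of $\mathsf{dual}$ and $\omega$. First I would compute the two reading words of both sides. By Proposition~\ref{prop: sigma_property},
\[
    \readA(\sigma(\pi)) = (d_1(\pi), \ldots, d_n(\pi)),
    \qquad
    \readD(\sigma(\pi)) = (a_1(\pi), \ldots, a_n(\pi)),
\]
and by Proposition~\ref{prop: eta_property},
\[
    \readA(\eta(\pi)) = (a_1(\pi), \ldots, a_n(\pi)),
    \qquad
    \readD(\eta(\pi)) = (d_1(\pi), \ldots, d_n(\pi)).
\]
Applying Proposition~\ref{prop: dual_property} (which swaps the two reading words under duality), I obtain
\[
    \readA((\sigma(\pi))^{\mathsf{dual}}) = \readD(\sigma(\pi)) = (a_1(\pi), \ldots, a_n(\pi)) = \readA(\eta(\pi)),
\]
and similarly $\readD((\sigma(\pi))^{\mathsf{dual}}) = \readD(\eta(\pi))$.

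Next I need to argue that $\readA$ alone determines a plane tree in $\mathcal{T}_{n+1}$. This is the one place where I have to watch out, but it is immediate from the bijection: if $\readA(T_1) = \readA(T_2)$ and we set $\pi_i = \eta^{-1}(T_i)$, then by Proposition~\ref{prop: eta_property} we have $a(\pi_1) = \readA(T_1) = \readA(T_2) = a(\pi_2)$; since a Dyck path is determined by its area sequence and $\eta$ is a bijection, $T_1 = T_2$. Applying this to $(\sigma(\pi))^{\mathsf{dual}}$ and $\eta(\pi)$ yields the key identity.

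Given this identity, the first equality is immediate: applying $\sigma^{-1}$ gives $\sigma^{-1}((\sigma(\pi))^{\mathsf{dual}}) = \sigma^{-1}(\eta(\pi)) = \omega(\pi)$. For the second equality, I apply Proposition~\ref{prop: tree_involution} to rewrite the key identity as $\sigma(\pi) = (\eta(\pi))^{\mathsf{dual}}$, so that $\eta^{-1}((\eta(\pi))^{\mathsf{dual}}) = \eta^{-1}(\sigma(\pi))$. Finally, Proposition~\ref{prop: main_bijection} tells us that $\omega$ is an involution, so
\[
    \omega = \omega^{-1} = (\sigma^{-1} \circ \eta)^{-1} = \eta^{-1} \circ \sigma,
\]
which gives $\omega(\pi) = \eta^{-1}(\sigma(\pi)) = \eta^{-1}((\eta(\pi))^{\mathsf{dual}})$, completing the proof. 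The main conceptual step is the identity $(\sigma(\pi))^{\mathsf{dual}} = \eta(\pi)$, and its only subtle ingredient is the injectivity of $\readA$ on plane trees; once this is pinned down, the corollary follows from bookkeeping with the three earlier propositions.
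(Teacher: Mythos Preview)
Your proof is correct and uses the same ingredients as the paper's proof---Propositions~\ref{prop: dual_property}, \ref{prop: sigma_property}, \ref{prop: eta_property}, and the fact that a Dyck path is determined by its area sequence---but you organize the argument around the intermediate identity $(\sigma(\pi))^{\mathsf{dual}} = \eta(\pi)$, whereas the paper directly checks that the area sequence of each of $\sigma^{-1}((\sigma(\pi))^{\mathsf{dual}})$ and $\eta^{-1}((\eta(\pi))^{\mathsf{dual}})$ equals the depth sequence of $\pi$. The paper in fact records your key identity separately as Corollary~\ref{corollary.dual} and deduces it \emph{from} Corollary~\ref{cor: alt_main_bijection}; you have effectively reversed this order, which is perfectly fine and arguably more natural, since the tree-level identity is the conceptual heart of both results.
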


\begin{proof}
By Proposition~\ref{prop: main_bijection}, it suffices to prove that the area sequences of $\sigma^{-1}((\sigma(\pi))^{\mathsf{dual}})$ and 
$\eta^{-1}((\eta(\pi))^{\mathsf{dual}})$ are equal to the depth sequence of $\pi$. Using Propositions~\ref{prop: dual_property},~\ref{prop: sigma_property}, 
and~\ref{prop: eta_property}, we observe that this is indeed the case.
\end{proof}

\begin{corollary}
\label{corollary.dual}
Let $T\in \mathcal{T}_{n+1}$. Then $T^{\mathsf{dual}} = \eta \circ \sigma^{-1}(T)$.
\end{corollary}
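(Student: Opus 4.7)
The plan is to derive this corollary immediately from Corollary~\ref{cor: alt_main_bijection} together with the definition $\omega = \sigma^{-1} \circ \eta$ given in Proposition~\ref{prop: main_bijection}; almost all the work has already been done in the preceding propositions. First I would set $\pi = \sigma^{-1}(T) \in D_n$, so that $T = \sigma(\pi)$ and the target identity becomes $(\sigma(\pi))^{\mathsf{dual}} = \eta(\pi)$.

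Next I would invoke the first equality of Corollary~\ref{cor: alt_main_bijection}, which states $\omega(\pi) = \sigma^{-1}\bigl((\sigma(\pi))^{\mathsf{dual}}\bigr)$. Applying $\sigma$ to both sides gives
\[
    \sigma(\omega(\pi)) = (\sigma(\pi))^{\mathsf{dual}} = T^{\mathsf{dual}}.
\]
On the other hand, by the very definition $\omega = \sigma^{-1} \circ \eta$, we have $\sigma \circ \omega = \eta$, so
\[
    \sigma(\omega(\pi)) = \eta(\pi) = \eta(\sigma^{-1}(T)).
\]
Equating the two expressions yields $T^{\mathsf{dual}} = \eta(\sigma^{-1}(T))$, as desired.

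There is really no obstacle here; the statement is a direct reformulation of Corollary~\ref{cor: alt_main_bijection} in the language of plane trees, and the substantive content is packaged inside Proposition~\ref{prop: dual_property} (which matches the reading words $\readA$ and $\readD$ under $\mathsf{dual}$) and Propositions~\ref{prop: sigma_property} and~\ref{prop: eta_property} (which identify $\readA$ and $\readD$ with the area and depth sequences under $\sigma$ and $\eta$). The only thing worth double-checking is that $\sigma$ is indeed a bijection so that the substitution $\pi = \sigma^{-1}(T)$ makes sense for every $T \in \mathcal{T}_{n+1}$, but this is immediate from Definition~\ref{definition.stanley}.
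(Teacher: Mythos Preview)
Your proof is correct and follows exactly the approach the paper takes: the paper's own proof simply says ``This follows directly from Proposition~\ref{prop: main_bijection} and Corollary~\ref{cor: alt_main_bijection},'' and you have merely unpacked that one-line justification by writing $\pi = \sigma^{-1}(T)$, applying $\sigma$ to both sides of the identity $\omega(\pi) = \sigma^{-1}((\sigma(\pi))^{\mathsf{dual}})$, and using $\sigma \circ \omega = \eta$.
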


\begin{proof}
This follows directly from Proposition~\ref{prop: main_bijection} and Corollary~\ref{cor: alt_main_bijection}.
\end{proof}

Finally, we are ready to prove the symmetry of $F_n(q,t)$ and $G_n(q,t)$.

\begin{theorem}
We have
\[
	F_{n}(q,t) = F_{n}(t, q) \qquad \text{and} \qquad G_{n}(q,t) = G_{n}(t, q).
\]
\end{theorem}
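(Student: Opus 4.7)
The plan is to use the involution $\omega = \sigma^{-1} \circ \eta$ from Proposition~\ref{prop: main_bijection} as the symmetry-witnessing bijection for both polynomials. The key observation from that proposition is stronger than just a swap of the two numerical statistics: $\omega$ interchanges the entire area and depth \emph{sequences}, that is, $a_i(\omega(\pi)) = d_i(\pi)$ and $d_i(\omega(\pi)) = a_i(\pi)$ for each $i$. This sequence-level statement is exactly what is needed because $\dinv$ and $\ddinv$ are defined as pattern counts in the area and depth sequences, respectively, so they transform predictably under $\omega$.

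For the symmetry of $F_n(q,t)$, I would just sum the area/depth identities: since $\area(\pi) = \sum_i a_i(\pi)$ and $\depth(\pi) = \sum_i d_i(\pi)$, Proposition~\ref{prop: main_bijection} immediately gives $\area(\omega(\pi)) = \depth(\pi)$ and $\depth(\omega(\pi)) = \area(\pi)$. Reindexing the sum in~\eqref{equation.F} by $\pi \mapsto \omega(\pi)$ and using that $\omega$ is a bijection (in fact an involution) on $D_n$ then yields
\[
F_n(q,t) = \sum_{\pi \in D_n} q^{\area(\pi)} t^{\depth(\pi)} = \sum_{\pi \in D_n} q^{\depth(\omega(\pi))} t^{\area(\omega(\pi))} = F_n(t,q).
\]

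For $G_n(q,t)$ the argument is almost identical once one unpacks the definitions~\eqref{equation.dinv} and~\eqref{equation.ddinv}. The point is that $\dinv(\pi)$ is a function purely of the area sequence and $\ddinv(\pi)$ is the \emph{same} function applied to the depth sequence. Because $\omega$ swaps these two sequences entry-by-entry, one concludes $\dinv(\omega(\pi)) = \ddinv(\pi)$ and $\ddinv(\omega(\pi)) = \dinv(\pi)$. Substituting into~\eqref{equation.G} and again reindexing by $\omega$ gives $G_n(q,t) = G_n(t,q)$.

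There is no real obstacle here: the heavy lifting has already been carried out in the previous subsections. The only point that needs to be articulated carefully is the observation that $\ddinv$ is defined by the \emph{identical} combinatorial recipe as $\dinv$, just applied to the depth sequence instead of the area sequence; once this is noted, the sequence-level statement from Proposition~\ref{prop: main_bijection} (rather than the weaker statistic-level swap) is what makes the whole argument go through in a single line for each polynomial.
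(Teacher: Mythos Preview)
Your proposal is correct and follows essentially the same approach as the paper: both invoke Proposition~\ref{prop: main_bijection} to get the sequence-level swap under $\omega$, then observe that $\area$ and $\depth$ are sums of their sequences while $\dinv$ and $\ddinv$ are the identical pattern-count applied to the area and depth sequences respectively. The paper's proof is just a slightly terser version of what you wrote.
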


\begin{proof}
By Proposition~\ref{prop: main_bijection}, $\omega$ is a bijection on $D_{n}$ that interchanges the area and depth sequence of a Dyck path. 
As $\area$ and $\depth$ are defined as the sum of their respective sequences, we have that $\omega$ interchanges $\area$ and $\depth$, thereby
proving symmetry of $F_n(q,t)$.

By~\eqref{equation.dinv} and~\eqref{equation.ddinv}, the definitions of $\dinv$ and $\ddinv$ are identical except with the area and depth sequence
interchanged. Since by Proposition~\ref{prop: main_bijection} the involution $\omega$ interchanges the area and depth sequences, $\omega$ also 
interchanges $\dinv$ and $\ddinv$. Thus, $G_{n}(q,t)$ is symmetric in $q$ and $t$.
\end{proof}

From a similar argument, we obtain the following corollary.
\begin{corollary}
\label{corollary.Cqt depth}
We have
\[
	C_{n}(q,t) = \sum_{\pi \in D_{n}} q^{\depth(\pi)} t^{\ddinv(\pi)}.
\]
\end{corollary}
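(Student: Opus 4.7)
The plan is to simply push the identity $C_n(q,t)=\sum_{\pi\in D_n} q^{\area(\pi)} t^{\dinv(\pi)}$ through the involution $\omega=\sigma^{-1}\circ\eta$ constructed in Proposition~\ref{prop: main_bijection}. Since $\omega$ is a bijection $D_n\to D_n$, re-indexing the sum by $\pi\mapsto\omega(\pi)$ is harmless; all the work is in checking that the two statistics in the summand transform into $\depth$ and $\ddinv$ respectively.

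The first observation I would record is that $\omega$ does not merely interchange the totals $\area\leftrightarrow\depth$, but actually interchanges the entire area and depth \emph{sequences}: by Proposition~\ref{prop: main_bijection} we have $a_i(\omega(\pi))=d_i(\pi)$ and $d_i(\omega(\pi))=a_i(\pi)$ for all $1\leqslant i\leqslant n$. This is the crucial refinement, and it is exactly what the proof of that proposition supplies.

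The second observation is that $\dinv$ and $\ddinv$ are defined by the \emph{same} formula applied to the area sequence and depth sequence respectively (compare \eqref{equation.dinv} with \eqref{equation.ddinv}). Consequently, the sequence-level identity from Proposition~\ref{prop: main_bijection} immediately yields
\[
\dinv(\omega(\pi)) = \ddinv(\pi) \qquad\text{and}\qquad \area(\omega(\pi))=\depth(\pi).
\]

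Putting these together, I would conclude
\[
C_n(q,t) \;=\; \sum_{\pi\in D_n} q^{\area(\pi)} t^{\dinv(\pi)} \;=\; \sum_{\pi\in D_n} q^{\area(\omega(\pi))} t^{\dinv(\omega(\pi))} \;=\; \sum_{\pi\in D_n} q^{\depth(\pi)} t^{\ddinv(\pi)},
\]
where the middle equality uses that $\omega$ is a bijection on $D_n$. There is essentially no obstacle here; all the combinatorial content is already contained in Proposition~\ref{prop: main_bijection}. The only subtlety worth emphasizing in writing is that we need the statistic identities at the level of sequences (not just totals) to get $\dinv\leftrightarrow\ddinv$, but this is precisely what the proof of Proposition~\ref{prop: main_bijection} establishes.
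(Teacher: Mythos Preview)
Your proof is correct and is exactly the argument the paper has in mind: the corollary is stated immediately after the symmetry theorem with the remark ``From a similar argument,'' and that argument is precisely to push the defining sum for $C_n(q,t)$ through the involution $\omega$ of Proposition~\ref{prop: main_bijection}, using that $\omega$ swaps the full area and depth sequences (hence swaps $\area\leftrightarrow\depth$ and $\dinv\leftrightarrow\ddinv$). There is nothing to add.
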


\subsection{The Deutsch involution and $\omega$}
\label{section.deutsch}

We now define an involution $(\cdot)'$ on Dyck paths first introduced by Deutsch in~\cite{Deutsch.1999}.

\begin{definition}
\label{definition.deutsch}
We define $(\cdot)' \colon D_{n} \to D_{n}$ recursively as follows:
\renewcommand\labelenumi{(\arabic{enumi})}
\renewcommand\theenumi\labelenumi
\begin{enumerate}
	\item $\varepsilon' = \varepsilon$, where $\varepsilon$ is the empty Dyck path.
	\item For $\pi \in D_{n}$ and $n\geqslant 1$, write $\pi = N \alpha E \beta$, where $\alpha$ and $\beta$ are Dyck paths.
	Note that $\alpha, \beta$ are allowed to be empty. Then define $\pi ' = N \beta' E \alpha'$.
\end{enumerate}
\end{definition}

The map $\omega = \sigma^{-1} \circ \eta$ gives an explicit description of Deutsch's recursive operator as we first observed
using FindStat~\cite{FindStat}.

\begin{proposition}
\label{proposition.deutsch}
Let $\pi \in D_{n}$. Then $\omega(\pi) = \pi'$.
\end{proposition}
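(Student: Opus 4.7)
My plan is to induct on the semilength $n$ of $\pi$. The base case $n=0$ is immediate since both $\omega$ and $(\cdot)'$ fix the empty path. For the inductive step, write $\pi=N\alpha E\beta$ in the Deutsch decomposition; by the inductive hypothesis $\omega(\alpha)=\alpha'$ and $\omega(\beta)=\beta'$, so it suffices to show that $\omega$ satisfies the same recursion as $(\cdot)'$, namely
\[
	\omega(N\alpha E\beta) = N\omega(\beta) E \omega(\alpha).
\]

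To establish this recursion I would prove two decomposition lemmas describing how the Stanley and Haglund--Loehr maps interact with the Deutsch splitting. \textbf{Lemma A:} for Dyck paths $\gamma,\delta$, the plane tree $\sigma(N\gamma E\delta)$ has a root whose leftmost principal subtree is $\sigma(\gamma)$, followed in order by the principal subtrees of $\sigma(\delta)$. \textbf{Lemma B:} for Dyck paths $\alpha,\beta$, the plane tree $\eta(N\alpha E\beta)$ has a root whose leftmost principal subtree is the entire tree $\eta(\beta)$, followed in order by the principal subtrees of $\eta(\alpha)$. Granting both, Lemma A applied to $N\omega(\beta)E\omega(\alpha)$ gives a plane tree with leftmost subtree $\sigma(\omega(\beta))=\eta(\beta)$ and remaining subtrees equal to the principal subtrees of $\sigma(\omega(\alpha))=\eta(\alpha)$, which by Lemma B coincides with $\eta(\pi)$; applying $\sigma^{-1}$ then yields the desired recursion.

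Lemma A is immediate from Definition~\ref{definition.stanley}: the initial $N$ attaches a new leftmost child to the root, $\gamma$ grows that child's subtree, the matching $E$ returns the cursor to the root, and $\delta$ then contributes the remaining principal subtrees.

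The main obstacle is Lemma B, which I would verify directly from Definition~\ref{definition.haglund}. Let $m=|\alpha|$ and let $h_\alpha$ be the height of the leftmost peak of $\alpha$. The first column of $\pi$ has height $1+h_\alpha$, so the root of $\eta(\pi)$ has $1+h_\alpha$ children: the leftmost is anchored at the cell $(0.5,0.5)$, and the remaining $h_\alpha$ children are anchored at the first-column cells of $\alpha$ translated by $(0,1)$ into $\pi$'s frame. Since translation by an integer vector preserves both the Northeast direction and the property of being directly right of a North step, the subtrees grown recursively at these non-leftmost children coincide with the principal subtrees of $\eta(\alpha)$. For the leftmost child, the Northeast traversal from $(0.5,0.5)$ first meets a cell right of a North step precisely at $(m+1.5,m+1.5)$: since $\alpha$ stays strictly above $y=x+1$ in $\pi$'s frame, the first return of $\pi$ to the main diagonal after $(0,0)$ is $(m+1,m+1)$, which is the origin of $\beta$. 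A second application of translation invariance, now within the region occupied by $\beta$, shows that the subtree rooted at the leftmost child is exactly $\eta(\beta)$. Combining the two parts gives Lemma B and closes the induction.
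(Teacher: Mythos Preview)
Your argument is correct and takes a genuinely different route from the paper's. The paper invokes Proposition~\ref{prop: main_bijection} to reduce the claim to the identity of sequences $(d_1(\pi),\ldots,d_n(\pi))=(a_1(\pi'),\ldots,a_n(\pi'))$, and then verifies this identity by induction using the recursive decomposition of the depth sequence established in the proof of Proposition~\ref{prop: recurrence}. In other words, the paper routes everything through the depth sequence and the reading-word machinery of Section~\ref{section.dual}. You instead bypass all of that and show directly that $\omega$ satisfies the Deutsch recursion $\omega(N\alpha E\beta)=N\omega(\beta)E\omega(\alpha)$, via structural decompositions of $\sigma$ and $\eta$ under the first-return splitting (your Lemmas~A and~B). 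This is more self-contained---it does not rely on Propositions~\ref{prop: dual_property}--\ref{prop: main_bijection}---at the cost of a direct analysis of $\eta$.

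One point in Lemma~B deserves slightly more care than ``translation invariance'': you should argue explicitly that the Northeast traversal from an $\alpha$-cell in $\pi$'s frame cannot spill over into $\beta$'s region and pick up an unwanted child. The reason is exactly the fact you already cite for the leftmost child: since $\pi$ returns to the main diagonal at $(m+1,m+1)$, the diagonal ray from any $\alpha$-cell (which sits at level $\geqslant 1$) exits the region between $\pi$ and $y=x$ before reaching any cell of $\beta$; equivalently, the first row $j>i$ with $a_j(\pi)\leqslant a_i(\pi)$ that lies outside $\alpha$ is row $m+2$, where $a_{m+2}(\pi)=0<a_i(\pi)$. With that sentence added, your Lemma~B is airtight. (Also, ``strictly above $y=x+1$'' should read ``weakly above'', though your conclusion from it is unaffected.)
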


\begin{proof}
By Proposition~\ref{prop: main_bijection}, it suffices to prove that
\[
	(d_{1}(\pi), d_{2}(\pi), \ldots, d_{n}(\pi)) = (a_{1}(\pi'), a_{2}(\pi'), \ldots, a_{n}(\pi')).
\]
We proceed by induction on $n$. We have that both the area and depth sequence of $\varepsilon$ are $\emptyset$ . Assume that 
$(d_{1}(\pi), d_{2}(\pi), \ldots, d_{j}(\pi)) = (a_{1}(\pi'), a_{2}(\pi'), \ldots, a_{j}(\pi'))$ for all $\pi \in D_{j}$, where $0 \leqslant j \leqslant n$. Let $\pi \in D_{n+1}$ 
and let $\alpha$ and $\beta$ be Dyck paths such that $\pi = N\alpha E \beta$. Let $k-1$ be the semilength of $\alpha$. We have 
that $(k,k)$ is the first time the path $\pi$ touches the diagonal after $(0,0)$. From the definition of the depth labelling and the argument in the proof of 
Proposition~\ref{prop: recurrence}, we have $(d_{1}(\pi), d_{2}(\pi), \ldots, d_{n+1}(\pi)) = (0, d_{1}(\beta)+1, d_{2}(\beta), \ldots, d_{n+1-k}(\beta), 
d_{1}(\alpha), d_{2}(\alpha), \ldots, d_{k-1}(\alpha))$. From the definition of the area sequence and $(\cdot)'$, we have that 
$(a_{1}(\pi'), a_{2}(\pi'), \ldots, a_{n+1}(\pi')) = (0, a_{1}(\beta')+1, a_{2}(\beta')+1, \ldots, a_{n+1-k}(\beta')+1, a_{1}(\alpha'), a_{2}(\alpha'),  
\ldots, a_{k-1}(\alpha'))$. Note that $\alpha$ and $\beta$ have semilength strictly less than $n+1$. Hence by induction 
$(d_{1}(\beta), \ldots, d_{n+1-k}(\beta)) = (a_{1}(\beta'),\ldots, a_{n+1-k}(\beta'))$ and $(d_{1}(\alpha), d_{2}(\alpha), \ldots, d_{k-1}(\alpha)) 
= (a_{1}(\alpha'), a_{2}(\alpha'),  \ldots, a_{k-1}(\alpha'))$. Thus, $(d_{1}(\pi), d_{2}(\pi), \ldots, d_{n+1}(\pi)) = (a_{1}(\pi'), a_{2}(\pi'), \ldots, a_{n+1}(\pi'))$.
\end{proof}

Using Corollary~\ref{cor: alt_main_bijection} and Proposition~\ref{proposition.deutsch}, we find a relation between the $(\cdot)^{\mathsf{dual}}$ operator 
defined on plane trees and the one defined on Dyck paths.

\begin{corollary}
The following diagram commutes:
\[\begin{tikzcd}
D_{n} \arrow[leftrightarrow]{r}{(\cdot)'} \arrow[swap]{d}{\sigma \text{ or }\eta} & D_{n} \arrow{d}{\sigma \text{ or }\eta} \\
\mathcal{T}_{n+1} \arrow[leftrightarrow]{r}{(\cdot)^{\mathsf{dual}}} & \mathcal{T}_{n+1}.
\end{tikzcd}
\]
\end{corollary}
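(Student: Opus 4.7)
The plan is to observe that this corollary is essentially a repackaging of two results already established: Corollary~\ref{cor: alt_main_bijection}, which states
\[
\omega(\pi) = \sigma^{-1}\bigl((\sigma(\pi))^{\mathsf{dual}}\bigr) = \eta^{-1}\bigl((\eta(\pi))^{\mathsf{dual}}\bigr),
\]
and Proposition~\ref{proposition.deutsch}, which identifies $\omega(\pi) = \pi'$. So my first step is simply to substitute $\pi'$ for $\omega(\pi)$ in the two equalities above, giving
\[
\pi' = \sigma^{-1}\bigl((\sigma(\pi))^{\mathsf{dual}}\bigr) \quad \text{and} \quad \pi' = \eta^{-1}\bigl((\eta(\pi))^{\mathsf{dual}}\bigr).
\]

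Next, I apply $\sigma$ (respectively $\eta$) to both sides of the first (respectively second) equation. This yields $\sigma(\pi') = (\sigma(\pi))^{\mathsf{dual}}$ and $\eta(\pi') = (\eta(\pi))^{\mathsf{dual}}$, which is precisely the content of the commutative diagram in both of its interpretations. Since $(\cdot)'$ is an involution on $D_n$ by Definition~\ref{definition.deutsch}, and $(\cdot)^{\mathsf{dual}}$ is an involution on $\mathcal{T}_{n+1}$ by Proposition~\ref{prop: tree_involution}, the horizontal arrows are bijections going in either direction and no further compatibility check is needed.

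There is no real obstacle here; the heavy lifting has already been done in establishing Corollary~\ref{cor: alt_main_bijection} and Proposition~\ref{proposition.deutsch}. The proof is just two or three lines long, and I would present it as a direct consequence of those two facts without further computation.
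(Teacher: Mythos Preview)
Your proposal is correct and matches the paper's approach exactly: the paper states the corollary with no explicit proof, simply prefacing it with the remark that it follows from Corollary~\ref{cor: alt_main_bijection} and Proposition~\ref{proposition.deutsch}, which is precisely the combination you invoke.
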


Deutsch proved~\cite{Deutsch.1999} that the operator $(\cdot)'$ interchanges the initial rise ($\mathsf{IR}$) of a Dyck path (the number of North steps 
before the first East step) with its number of returns ($\mathsf{RET}$) (the number of times the Dyck path touches the diagonal excluding the point $(0,0)$). 
We see that the initial rise and the number of returns of a Dyck path correspond to the length of the leftmost path from the root to a leaf and the number 
of children of the root, respectively, under $\sigma$ (and vice versa under $\eta$). This gives an alternate explanation of the symmetry of the 
\defn{Tutte polynomial} 
\begin{equation*}
	T_{\mathsf{Cat}_{n}}(q, t) = \sum_{\pi \in D_{n}} q^{\mathsf{IR}(\pi)}t^{\mathsf{RET}(\pi)}
\end{equation*}
associated with the Catalan matroid $\mathsf{Cat}_{n}$ defined in~\cite{Ardila.2003}.

Stump~\cite{Stump.2014} proved that the coefficient of $q^{a}t^{b}$ of $T_{\mathsf{Cat}_{n}}(q, t)$ only depends on the sum $a+b$ using a map 
given by Speyer~\cite{Speyer.2013}. This map $\tau$ fixes Dyck paths $\pi$, where $\mathsf{RET}(\pi) = 1$ and sends Dyck paths 
$\pi = N\alpha_{1}EN\alpha_{2}EN\alpha_{3}E\ldots N \alpha_{k}E$ to $NN\alpha_{1}E\alpha_{2}EN\alpha_{3}E\ldots N \alpha_{k}E$, where 
$\mathsf{RET}(\pi) = k >1$ and $\alpha_{i}$ is a Dyck path that is possibly empty. Speyer's map has a nice relation with $\omega$ as follows.

\begin{proposition}
Let $\pi \in D_{n}$. Then $\tau^{-1}\circ\omega(\pi) = \omega \circ \tau(\pi)$.
\end{proposition}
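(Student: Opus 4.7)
The plan is to use Proposition~\ref{proposition.deutsch} to replace $\omega$ by the Deutsch involution $(\cdot)'$, whose recursion is compatible with the arch decomposition $\pi = N\alpha E \beta$, and to observe that Speyer's map $\tau$ is defined on the same arch decomposition. Since both operations manipulate arches, I expect the identity to follow from a direct recursive unfolding, with no deeper principle needed. I would split on $\mathsf{RET}(\pi)$.

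When $\mathsf{RET}(\pi) \geqslant 2$, I would write $\pi = N \alpha_1 E N \alpha_2 E \delta$ for Dyck paths $\alpha_1, \alpha_2, \delta$. Applying Definition~\ref{definition.deutsch} twice yields
\[
\omega(\pi) = N (N\alpha_2 E \delta)' E \alpha_1' = N N \delta' E \alpha_2' E \alpha_1'.
\]
Since $\tau(\pi) = NN\alpha_1 E \alpha_2 E \delta = N (N\alpha_1 E \alpha_2) E \delta$, a single application of Deutsch to this new arch decomposition gives
\[
\omega(\tau(\pi)) = N \delta' E (N\alpha_1 E \alpha_2)' = N \delta' E N \alpha_2' E \alpha_1'.
\]
On the other hand, the first arch of $\omega(\pi) = NN\delta' E \alpha_2' E \alpha_1'$ has inner path $N\delta' E \alpha_2'$, so $\tau^{-1}$, which splits the first arch, yields $\tau^{-1}(\omega(\pi)) = N \delta' E N \alpha_2' E \alpha_1'$, agreeing with $\omega(\tau(\pi))$.

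When $\mathsf{RET}(\pi) = 1$, the path $\pi$ is fixed by $\tau$, so the right-hand side equals $\omega(\pi)$. Because $(\cdot)'$ interchanges $\mathsf{IR}$ and $\mathsf{RET}$ (as recalled just after Proposition~\ref{proposition.deutsch}), one has $\mathsf{IR}(\omega(\pi)) = 1$, placing $\omega(\pi)$ outside the image of the arch-splitting $\tau^{-1}$; reading $\tau^{-1}$ as the natural partial inverse that fixes paths with $\mathsf{IR} = 1$, both sides again collapse to $\omega(\pi)$. The main obstacle is not conceptual but notational: one must carefully expand $\pi' = N\beta' E \alpha'$ when $\beta$ itself is further decomposed, and confirm that the $\mathsf{RET}(\pi) = 1$ boundary case is consistent with the convention used to define $\tau^{-1}$.
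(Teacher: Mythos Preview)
Your proof is correct and follows the same approach as the paper: both split on $\mathsf{RET}(\pi)$, use Proposition~\ref{proposition.deutsch} to replace $\omega$ by $(\cdot)'$, and unfold the Deutsch recursion on the arch decomposition, with the $\mathsf{RET}(\pi)=1$ case handled identically via the $\mathsf{IR}/\mathsf{RET}$ swap. The only cosmetic differences are that you bundle the tail as a single Dyck path $\delta$ (the paper writes out $N\alpha_3E\cdots N\alpha_kE$) and you compute $\tau^{-1}\circ\omega(\pi)$ directly, whereas the paper instead verifies the equivalent identity $\omega(\pi)=\tau\circ\omega\circ\tau(\pi)$.
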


\begin{proof}
If $\mathsf{RET}(\pi) = 1$, then $\tau(\pi) = \pi$ and $\omega \circ \tau(\pi) = \omega(\pi)$ . As $\omega$ interchanges initial rises and the number 
of returns, we have $\mathsf{IR}(\omega(\pi)) = 1$. This implies that $\tau^{-1}\circ\omega(\pi) = \omega(\pi)$. Thus, we have 
$\tau^{-1}\circ\omega(\pi) = \omega \circ \tau(\pi)$.

If $\mathsf{RET}(\pi) = k>1$, let $\pi = N\alpha_{1}EN\alpha_{2}EN\alpha_{3}E\ldots N \alpha_{k}E$, where $\alpha_{i}$ is a possibly empty 
Dyck path. We show $\omega(\pi) = \tau\circ\omega\circ\tau(\pi)$. From Definition~\ref{definition.deutsch} and Proposition~\ref{proposition.deutsch}
\[
	\omega(\pi) = N(N\alpha_{2}EN\alpha_{3}E\ldots N \alpha_{k}E)'E\alpha_{1}'.
\] 
On the other hand,
\begin{align*}
	\tau(\pi) &= NN\alpha_{1}E\alpha_{2}EN\alpha_{3}E\ldots N \alpha_{k}E,\\
	\omega\circ\tau(\pi) &= N(N\alpha_{3}E\ldots N \alpha_{k}E)'E(N\alpha_{1}E\alpha_{2})'\\
	&=  N(N\alpha_{3}E\ldots N \alpha_{k}E)'EN\alpha_{2}'E\alpha_{1}',\\
	\tau\circ\omega\circ\tau(\pi) &=  NN(N\alpha_{3}E\ldots N \alpha_{k}E)'E\alpha_{2}'E\alpha_{1}'\\
	&= N(N\alpha_{2}EN\alpha_{3}E\ldots N \alpha_{k}E)'E\alpha_{1}'.
\end{align*}
Hence, $\omega(\pi) = \tau\circ\omega\circ\tau(\pi)$.
\end{proof}

\subsection{Parking Functions}
\label{section.parking_functions}

Kreweras~\cite{Kreweras.1980} essentially proved recursively
\begin{equation}
\label{equation.kreweras}
	\sum_{T \in \mathcal{L}_{n+1}} q^{\coinv(T)} = \sum_{\pi \in \mathcal{P}_{n}} q^{\area(\pi)}. 
\end{equation}
Combining this with~\eqref{equation: coinv_graph}, one obtains the formula
\begin{equation}
q^{n} \sum_{\pi \in \mathcal{P}_{n}} (1+q)^{\area(\pi)}= \sum _{G \in \mathcal{C}_{n+1}} q^{e(G)},
\end{equation}
which was also observed in~\cite{AP.2018}.
Following in Gessel and Wang's footsteps~\cite{GesselWang.1979}, we provide a combinatorial proof of this formula.

We start by defining an algorithm that produces a specific spanning tree from a labelled connected graph. Recall from 
Section~\ref{section.trees and parking_functions} that $\mathcal{L}_n$ is the set of all labelled trees on $n$ vertices and
$\mathcal{C}_n$ is the set of all labelled connected graphs with vertex set $\{0,\ldots,n-1\}$.

\begin{definition} 
Let $\mathcal{S}\colon \mathcal{C}_{n} \to \mathcal{L}_{n}$ be given by the following algorithm:
\renewcommand\labelenumi{(\arabic{enumi})}
\renewcommand\theenumi\labelenumi
\begin{enumerate} 
	\item Start with all vertices of $G \in \mathcal{C}_{n}$ in the ``Not Seen" state.
	\item Visit vertex $0$ and set its state to ``Seen". Visit all vertices $v$ adjacent to $0$ in increasing label order including the edges from vertex $0$ to $v$.
	\item If all vertices of $G$ are in the ``Seen" state, then return the subgraph of $G$ comprised of all vertices and edges that were visited. 
	Otherwise, find the vertex $v$ that was visited last and is in the ``Not Seen" state. Set $v$ to ``Seen''. Visit all ``Not Seen" vertices $w$ adjacent 
	to $v$ that have not been visited already including the edge between $v$ and $w$, where vertices with smaller labels are visited first.
\end{enumerate}
\end{definition}

Clearly, $\mathcal{S}(G)$ is connected and acyclic for any $G \in \mathcal{C}_{n}$ implying that $\mathcal{S}$ is well defined. For $T \in \mathcal{L}_{n}$, 
let $\mathcal{G}_{\mathcal{S}}(T)$ denote the set of all labelled connected graphs $G$ satisfying $\mathcal{S}(G) = T$.

\begin{example}
Consider the labelled connected graph $G$ in Figure~\ref{figure.G}. Its spanning tree $\mathcal{S}(G)$ is given in Figure~\ref{figure.S(G)}.
\end{example}

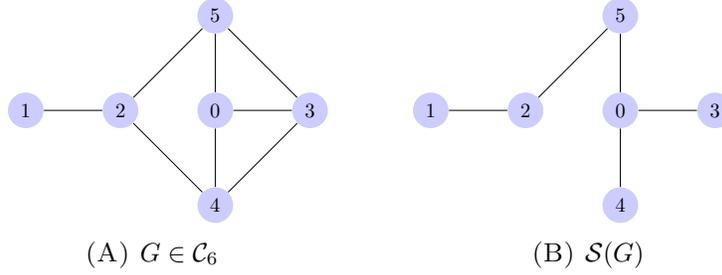
\begin{figure}[t]
\begin{subfigure}[b]{.25\linewidth}
\scalebox{0.7}{
\begin{tikzpicture}  
  [scale=.9,auto=center, every node/.style={circle,fill=blue!20}]
    
  \node (a1) at (0,0) {$1$};  
  \node (a2) at (2,0)  {$2$};  
  \node (a0) at (4,0) {$0$};  
  \node (a3) at (6,0) {$3$};
  \node (a5) at (4,2) {$5$};
  \node (a4) at (4,-2) {$4$};
  
  \draw (a1) -- (a2); 
  \draw (a2) -- (a5);  
  \draw (a0) -- (a5);
  \draw (a3) -- (a5);
  \draw (a4) -- (a0);
  \draw (a4) -- (a3);
  \draw (a2) -- (a4);
  \draw (a0) -- (a3);
\end{tikzpicture}
}
\caption{$G \in \mathcal{C}_6$
\label{figure.G}
}
\end{subfigure}
\hspace{1cm}
\begin{subfigure}[b]{.3\linewidth}
\scalebox{0.7}{
\begin{tikzpicture}  
  [scale=.9,auto=center, every node/.style={circle,fill=blue!20}]
    
  \node (a1) at (0,0) {$1$};  
  \node (a2) at (2,0)  {$2$};  
  \node (a0) at (4,0) {$0$};  
  \node (a3) at (6,0) {$3$};
  \node (a5) at (4,2) {$5$};
  \node (a4) at (4,-2) {$4$};
  
  \draw (a1) -- (a2); 
  \draw (a2) -- (a5);  
  \draw (a0) -- (a5);
  \draw (a4) -- (a0);
  \draw (a0) -- (a3);
\end{tikzpicture}
}
\caption{$\mathcal{S}(G)$
\label{figure.S(G)}
}
\end{subfigure}
    \caption{Connected labelled graph $G$ and its spanning tree $\mathcal{S}(G)$.
    \label{figure.connected graph}}
\end{figure}

We will now associate a set of labelled connected graphs to a labelled tree by adding certain edges to the tree.  Let $T \in \mathcal{L}_{n}$. To each vertex 
$i$ in $T$, we associate a set of edges $\mathcal{E}_{T}(i)$ that are not in $T$ as follows. Let $Q$ be the unique path from $i$ to the root node $0$ in
$T$. We let
\[
	\mathcal{E}_{T}(i) = \{ \{i, j \} \mid j \text{ is a sibling of some vertex } k \in Q \text{ and } j < k \}.
\]
Define $\mathcal{E}_{T} = \bigcup_{i =0}^{n} \mathcal{E}_{T}(i)$ and let $\mathcal{G}_{\mathcal{E}}(T)$ denote the set of all connected graphs obtained by 
adding some subset of edges from $\mathcal{E}_{T}$ to $T$.

\begin{example}
Let $T$ be the labelled tree in Figure~\ref{figure.S(G)}. Then
\[
	\mathcal{E}_T = \{\{1,3\}, \{1,4\}, \{2,3\}, \{2,4\}, \{5,3\}, \{5,4\}, \{4,3\}\}.
\]
\end{example}

\begin{proposition}
\label{prop: connected_graph_bijection}
Let $T \in \mathcal{L}_{n}$. Then $\mathcal{G}_{\mathcal{S}}(T) = \mathcal{G}_{\mathcal{E}}(T)$.
\end{proposition}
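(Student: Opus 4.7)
My plan is to prove both inclusions $\mathcal{G}_{\mathcal{S}}(T) \subseteq \mathcal{G}_{\mathcal{E}}(T)$ and $\mathcal{G}_{\mathcal{E}}(T) \subseteq \mathcal{G}_{\mathcal{S}}(T)$ by way of a single structural lemma describing the internal state of the algorithm $\mathcal{S}$.

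First I will prove the following stack description. Any execution of $\mathcal{S}$ that outputs the tree $T$ is forced to perform a preorder traversal of $T$ in which the children of each vertex are explored in decreasing label order (because larger children are pushed onto the implicit stack last and popped first). I will show by induction on the processing step that at the moment a vertex $v$ is about to be set to ``Seen'', the set of vertices that are currently visited but not seen, other than $v$, is exactly
\[
	\mathcal{N}_T(v) := \{ j : j \text{ is a sibling of some } k \text{ on the path from } v \text{ to } 0 \text{ with } j < k\}.
\]
The inductive step is straightforward: after processing $v$, the children $c_1 < c_2 < \cdots < c_m$ of $v$ in $T$ are pushed in increasing order, and the stack below each $c_i$ when $c_i$ is eventually processed becomes $\mathcal{N}_T(v) \cup \{c_1, \ldots, c_{i-1}\}$, which coincides with $\mathcal{N}_T(c_i)$. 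Observe that $w \in \mathcal{N}_T(v)$ is precisely the condition $\{v, w\} \in \mathcal{E}_T(v)$.

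For the inclusion $\mathcal{G}_{\mathcal{S}}(T) \subseteq \mathcal{G}_{\mathcal{E}}(T)$, take $G$ with $\mathcal{S}(G) = T$ and any non-tree edge $\{u, w\}$ of $G$, assuming without loss of generality that $u$ is processed first. If $w$ were unvisited at the moment $u$ is processed, then $\{u, w\}$ would be added to $\mathcal{S}(G)$, contradicting $\mathcal{S}(G) = T$. Thus $w$ is visited but not yet seen at that moment, so by the stack description $w \in \mathcal{N}_T(u)$ and hence $\{u, w\} \in \mathcal{E}_T(u) \subseteq \mathcal{E}_T$.

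For the converse, let $G = T \cup E'$ with $E' \subseteq \mathcal{E}_T$ and run $\mathcal{S}$ on $G$. I will show inductively that the execution agrees with the preorder DFS on $T$ above and only adds edges of $T$. The inductive step reduces to checking that whenever $v$ is about to be processed, every $E'$-neighbor $w$ of $v$ is already visited. If $\{v, w\} \in \mathcal{E}_T(v)$, then $w \in \mathcal{N}_T(v)$ is already on the stack. If instead $\{v, w\} \in \mathcal{E}_T(w)$, then $v$ is a smaller sibling of some ancestor $k$ of $w$ (possibly $k = w$), and since siblings are processed in decreasing label order, the subtree rooted at $k$ (which contains $w$) is entirely processed before $v$ is reached. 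The main obstacle is proving the stack description cleanly, since the algorithm pushes all unvisited neighbors of a processed vertex simultaneously, making the bookkeeping of the ``visited but not seen'' set slightly delicate; once this lemma is in hand, both inclusions fall out immediately.
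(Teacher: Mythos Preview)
Your proof is correct and follows the same strategy as the paper's: both inclusions rest on the observation that when a vertex $v$ is marked ``Seen'', the set of visited-but-not-seen vertices other than $v$ is exactly your $\mathcal{N}_T(v)$. The only difference is presentational---you isolate this stack description as an explicit inductive lemma and handle the case split $\{v,w\}\in\mathcal{E}_T(v)$ versus $\{v,w\}\in\mathcal{E}_T(w)$ in the reverse inclusion, whereas the paper leaves the stack description as an ``Observe that\ldots'' and absorbs the case split into the WLOG choice that $v$ is seen before $w$.
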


\begin{proof}
Let $G \in \mathcal{G}_{\mathcal{S}}(T)$. This implies that $G = T \sqcup S$, where $S$ is a set of edges not in $T$. Assume 
$S \not \subseteq \mathcal{E}_{T}$ and let $e = \{v, w\} \in S - \mathcal{E}_{T}$, where $v$ was ``Seen'' before $w$ in the construction of 
$\mathcal{S}(G)$. Observe that in the step, where $v$ is marked as ``Seen", all visited vertices that are ``Not Seen" are endpoints of edges in
$\mathcal{E}_{T}$. Hence, vertex $w$ has not been visited when $v$ was marked as ``Seen". This implies that $v$ is a parent of $w$ in $T$ which 
contradicts $e$ being an edge not in $T$. Therefore, $\mathcal{G}_{\mathcal{S}}(T) \subseteq \mathcal{G}_{\mathcal{E}}(T)$.

Let $G = T \sqcup S$, where $S \subseteq \mathcal{E}_{T}$ and assume $\mathcal{S}(G) \not= T$. Let edge $e = \{v, w\} \in S$ be the first edge
used in $\mathcal{S}(G)$ that is not present in $T$. Assume $v$ is marked ``Seen" before vertex $w$ in the construction of $\mathcal{S}(G)$. Recall 
that $w$ is a smaller sibling of a vertex on the path from $v$ to the root in $T$. This implies that $w$ has already been visited when $v$ is marked 
as ``Seen", and thus the edge $e$ cannot have been used in the construction of $\mathcal{S}(G)$. Therefore, 
$\mathcal{G}_{\mathcal{E}}(T) \subseteq \mathcal{G}_{\mathcal{S}}(T)$ and $\mathcal{G}_{\mathcal{S}}(T) = \mathcal{G}_{\mathcal{E}}(T)$.
\end{proof}

Observe that the following relation holds between the number of associated edges of a vertex to the statistic defined after Definition~\ref{def: pf_tree_bijection}.

\begin{lemma} \label{lemma: area_edges}
Let $i$ be a vertex in $T \in \mathcal{L}_n$. Then $\lvert \mathcal{E}_{T}(i) \rvert = \tilde{d}_{i}(T)$.
\end{lemma}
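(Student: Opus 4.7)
The plan is to prove the equality by induction on the depth of $i$ in $T$, since this matches the recursive definition of $\tilde{d}_i(T)$. First I would write the unique path from $i$ to the root as $0 = v_0, v_1, \ldots, v_m = i$, and denote by $k_t$ the position of $v_t$ among its siblings (ordered by label from left to right), so that $v_t$ is the $k_t$-th leftmost child of $v_{t-1}$. Unrolling the recursion for $\tilde{d}$ along this path gives $\tilde{d}_i(T) = \sum_{t=1}^{m}(k_t - 1)$, so the claim reduces to showing $|\mathcal{E}_T(i)| = \sum_{t=1}^{m}(k_t - 1)$.

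For the base case $i = 0$, the path $Q$ consists only of the root, which has no siblings, so $\mathcal{E}_T(0) = \emptyset$ and $\tilde{d}_0(T) = 0$. For the inductive step, let $p = v_{m-1}$ be the parent of $i$, and note that the path for $i$ extends the path for $p$ by one vertex. I would split $\mathcal{E}_T(i)$ into two parts: (a) edges $\{i, j\}$ where $j$ is a sibling of $i$ with $j < i$, and (b) edges $\{i, j\}$ where $j$ is a sibling of some $v_t$ with $t < m$ and $j < v_t$. Part (a) contributes exactly $k_m - 1$ elements, since $i$ is the $k_m$-th leftmost child of $p$ and hence has precisely $k_m - 1$ siblings with smaller label. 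For part (b), the map $\{i, j\} \mapsto \{p, j\}$ is a bijection onto $\mathcal{E}_T(p)$, because the path for $p$ is exactly $v_0, \ldots, v_{m-1}$. The inductive hypothesis gives $|\mathcal{E}_T(p)| = \tilde{d}_p(T)$, and by the recursive definition $\tilde{d}_i(T) = \tilde{d}_p(T) + (k_m - 1)$, which closes the induction.

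The only subtlety --- more a point to state carefully than a real obstacle --- is to observe that the sets of siblings of distinct path vertices $v_t$ and $v_{t'}$ are disjoint, since the siblings of $v_t$ are precisely the other children of $v_{t-1}$, and distinct path vertices have distinct parents. This disjointness guarantees that no edge $\{i, j\}$ is counted twice in either the partition above or in the identification used in (b), so the sum $\sum_{t=1}^{m}(k_t - 1)$ counts each element of $\mathcal{E}_T(i)$ exactly once. It also ensures automatically that the degenerate cases $j = i$ (in the definition of $\mathcal{E}_T(i)$) and $j = p$ (in the bijection in (b)) do not occur, since they would force $j < j$.
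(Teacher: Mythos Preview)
Your proof is correct and follows essentially the same approach as the paper: induction on the distance from $i$ to the root, with the inductive step splitting $\mathcal{E}_T(i)$ into the edges coming from smaller siblings of $i$ itself (contributing $k_m-1$) and those coming from siblings of earlier path vertices, which are put in bijection with $\mathcal{E}_T(p)$. If anything, your version is slightly more explicit than the paper's, which writes the inductive step as ``$\mathcal{E}_T(j) = \mathcal{E}_T(i) \cup S$'' (an abuse of notation, since the two sets consist of edges with different fixed endpoints); your map $\{i,j\}\mapsto\{p,j\}$ makes this identification precise.
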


\begin{proof}
We induct on the distance of vertex $i$ to the root, where distance is defined as the length of the path between the two vertices. The only vertex 
that is distance $0$ from the root is the root itself. We clearly have $\mathcal{E}_{T}(0) = \emptyset$ and $\tilde{d}_{0}(T) = 0$. Assume that 
$\lvert \mathcal{E}_{T}(i) \rvert = \tilde{d}_{i}(T)$ for all vertices $i$ that are distance $m$ from the root. Let $j$ be a vertex that is distance $m+1$ 
from the root and let $Q$ be the unique path from $j$ to the root. By assumption, $\lvert \mathcal{E}_{T}(i) \rvert = \tilde{d}_{i}(T)$, where $i$ is the 
parent of $j$. Let $S$ be the set of all siblings of $j$ that are smaller than $j$. Observe that $\mathcal{E}_{T}(j) = \mathcal{E}_{T}(i) \cup S$ 
and $\tilde{d}_{j}(T) = \tilde{d}_{i}(T) + k-1$ where $j$ is the $k$-th smallest child of $i$. Thus, $\lvert \mathcal{E}_{T}(j) \rvert = \tilde{d}_{j}(T)$.
\end{proof}

We now prove~\eqref{equation.kreweras} combinatorially.

\begin{theorem} 
\label{thm: pf_graph_formula}
We have 
\[
	q^{n} \sum_{\pi \in \mathcal{P}_{n}} (1+q)^{\area(\pi)}= \sum _{G \in \mathcal{C}_{n+1}} q^{e(G)}.
\]
\end{theorem}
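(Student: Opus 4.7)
The plan is to chain together the three main ingredients that the authors have already set up: the Haglund--Loehr bijection $\lambda$ between parking functions and labelled trees, the statistic identity $\lvert \mathcal{E}_T(i)\rvert = \tilde{d}_i(T)$ from Lemma~\ref{lemma: area_edges}, and the spanning-tree correspondence of Proposition~\ref{prop: connected_graph_bijection}. The strategy is to rewrite the left-hand side as a sum over labelled trees, expand the factor $(1+q)^{\area}$ as a sum over subsets of extra edges, and then recognize the resulting double sum as the edge generating function over connected graphs.

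First I would apply $\lambda\colon \mathcal{P}_n \to \mathcal{L}_{n+1}$ together with formula~\eqref{equation: area_tree}, which gives
\[
	\sum_{\pi \in \mathcal{P}_{n}} (1+q)^{\area(\pi)} = \sum_{T \in \mathcal{L}_{n+1}} (1+q)^{\sum_{i=0}^{n} \tilde{d}_{i}(T)}.
\]
Next, Lemma~\ref{lemma: area_edges} lets me replace $\sum_{i} \tilde{d}_{i}(T)$ by $\lvert \mathcal{E}_{T} \rvert$, and then expand
\[
	(1+q)^{\lvert \mathcal{E}_{T} \rvert} = \sum_{S \subseteq \mathcal{E}_{T}} q^{\lvert S \rvert}.
\]

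The key step is then the bijective part. By Proposition~\ref{prop: connected_graph_bijection}, each $G \in \mathcal{C}_{n+1}$ decomposes uniquely as $G = T \sqcup S$ with $T = \mathcal{S}(G) \in \mathcal{L}_{n+1}$ and $S \subseteq \mathcal{E}_{T}$, because the sets $\mathcal{G}_{\mathcal{S}}(T) = \mathcal{G}_{\mathcal{E}}(T)$ partition $\mathcal{C}_{n+1}$ as $T$ ranges over $\mathcal{L}_{n+1}$. Since $T$ is a spanning tree on $n+1$ vertices it contributes exactly $n$ edges, so $e(G) = n + \lvert S \rvert$. Therefore
\[
	\sum_{G \in \mathcal{C}_{n+1}} q^{e(G)} = \sum_{T \in \mathcal{L}_{n+1}} \sum_{S \subseteq \mathcal{E}_{T}} q^{n + \lvert S \rvert} = q^{n} \sum_{T \in \mathcal{L}_{n+1}} (1+q)^{\lvert \mathcal{E}_{T} \rvert},
\]
which matches the reformulated left-hand side and completes the proof.

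No step is really a serious obstacle since the heavy lifting is already in the lemma and proposition above; the only thing to check carefully is that all the sums over $(T,S)$ indeed parametrize $\mathcal{C}_{n+1}$ exactly once, which is exactly the content of Proposition~\ref{prop: connected_graph_bijection}, together with the bookkeeping that $T$ contributes the factor $q^n$ from its $n$ edges.
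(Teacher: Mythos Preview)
Your proof is correct and is essentially identical to the paper's own argument: both use the Haglund--Loehr bijection together with \eqref{equation: area_tree} and Lemma~\ref{lemma: area_edges} to pass from parking functions to $\sum_{T}(1+q)^{\lvert \mathcal{E}_T\rvert}$, and Proposition~\ref{prop: connected_graph_bijection} plus the edge count $e(T)=n$ to match this with $\sum_{G} q^{e(G)}$. The only difference is the order in which you traverse the chain of equalities.
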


\begin{proof}
By Proposition~\ref{prop: connected_graph_bijection} and using the fact that a tree on $n+1$ vertices has $n$ edges, we observe
\begin{equation}
	 \sum _{G \in \mathcal{C}_{n+1}} q^{e(G)}=  \sum_{T \in \mathcal{L}_{n+1}} q^{e(T)}(1+q)^{\lvert\mathcal{E}_{T}\rvert}
	 = \sum_{T \in \mathcal{L}_{n+1}} q^{n}(1+q)^{\lvert\mathcal{E}_{T}\rvert}.
\end{equation}
Using Lemma~\ref{lemma: area_edges}, Definition~\ref{def: pf_tree_bijection}, and~\eqref{equation: area_tree}, we have
\begin{equation}
	\sum_{T \in \mathcal{L}_{n+1}} q^{n}(1+q)^{\lvert\mathcal{E}_{T}\rvert} = \sum_{T \in \mathcal{L}_{n+1}} q^{n}(1+q)^{\sum_{i=0}^{n} \tilde{d}_{i}(T)} 
	=  \sum_{\pi \in \mathcal{P}_{n}} q^{n}(1+q)^{\area(\pi)}.
\end{equation}
Combining the equations above, we obtain the desired result.
\end{proof}

Substituting $q = 1$ into Theorem~\ref{thm: pf_graph_formula} gives the following result which provides an explicit proof of a remark found 
in~\cite[Section 3]{GHQR.2019}.

\begin{corollary} 
The following identity holds
\[
	\sum_{\pi \in \mathcal{P}_{n}} 2^{\area(\pi)} = \lvert \mathcal{C}_{n+1} \rvert.
\]
\end{corollary}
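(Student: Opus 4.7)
The plan is a direct $q=1$ specialization of Theorem~\ref{thm: pf_graph_formula}, so there is essentially no work to do beyond evaluating both sides of the identity
\[
	q^{n} \sum_{\pi \in \mathcal{P}_{n}} (1+q)^{\area(\pi)}= \sum _{G \in \mathcal{C}_{n+1}} q^{e(G)}
\]
at $q = 1$. On the left-hand side, the prefactor $q^n$ collapses to $1$, and each weight $(1+q)^{\area(\pi)}$ becomes $2^{\area(\pi)}$, yielding precisely $\sum_{\pi \in \mathcal{P}_n} 2^{\area(\pi)}$. On the right-hand side, every monomial $q^{e(G)}$ collapses to $1$, so the sum degenerates to the cardinality $\lvert \mathcal{C}_{n+1} \rvert$ (which under the indexing convention of the corollary statement is what is denoted $\lvert \mathcal{C}_n \rvert$ on the right-hand side, modulo the $n \mapsto n+1$ shift between parking functions on $n$ cars and connected graphs on $n+1$ vertices used throughout Section~\ref{section.trees and parking_functions}). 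Equating the two specializations yields the asserted identity.

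There is no real obstacle here: all of the substantive combinatorics has already been carried out in building to Theorem~\ref{thm: pf_graph_formula}, namely the algorithm $\mathcal{S}$ producing a canonical spanning tree from each labelled connected graph, the bijective identification $\mathcal{G}_{\mathcal{S}}(T) = \mathcal{G}_{\mathcal{E}}(T)$ of Proposition~\ref{prop: connected_graph_bijection}, and the area-to-edges conversion $\lvert \mathcal{E}_T(i) \rvert = \tilde d_i(T)$ of Lemma~\ref{lemma: area_edges}. These ingredients jointly transport the generating function over $\mathcal{C}_{n+1}$ to a generating function over parking functions weighted by $(1+q)^{\area}$, at which point the $q = 1$ evaluation is purely formal. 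The only point worth flagging in the write-up is the interpretive one: the specialization makes explicit the combinatorial identity hinted at in~\cite[Section~3]{GHQR.2019}, with the factor $2^{\area(\pi)}$ encoding the choice, for each parking function $\pi$ and each vertex $i$ of the associated labelled tree $\lambda(\pi)$, of which subset of the $\tilde d_i$ admissible extra edges $\mathcal{E}_{\lambda(\pi)}(i)$ to adjoin.
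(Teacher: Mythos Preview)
Your proposal is correct and matches the paper's own one-line proof: the corollary is obtained simply by substituting $q=1$ into Theorem~\ref{thm: pf_graph_formula}. You also rightly flag the indexing discrepancy---the specialization literally yields $\lvert \mathcal{C}_{n+1}\rvert$ rather than $\lvert \mathcal{C}_n\rvert$ as written in the corollary statement---which appears to be a typographical slip in the paper rather than a different convention.
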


From Theorem~\ref{thm: pf_graph_formula} and~\eqref{equation: coinv_graph}, we obtain a new proof of the fact that $\mathsf{area}$ and $\mathsf{coinv}$ 
are equidistributed over labelled trees/parking functions~\cite{IrvingRattan.2021}. 

\begin{corollary}
The following identity holds
\[
	\sum_{T \in \mathcal{L}_{n}} q^{\area(\lambda^{-1}(T))} = \sum_{T \in \mathcal{L}_{n}} q^{\coinv(T)}.
\]
\end{corollary}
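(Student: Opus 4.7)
The plan is to combine Theorem~\ref{thm: pf_graph_formula} with the Gessel--Wang identity~\eqref{equation: coinv_graph}, both of which express a generating function over the set $\mathcal{C}_n$ of connected graphs, and then to convert the resulting identity into one in $q$ by a simple substitution. The bijection $\lambda \colon \mathcal{P}_{n-1} \to \mathcal{L}_n$ of Definition~\ref{def: pf_tree_bijection} then translates the parking function sum into a sum over labelled trees.

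First, I would apply~\eqref{equation: coinv_graph} to rewrite the right-hand side of Theorem~\ref{thm: pf_graph_formula} (with $n$ replaced by $n-1$ throughout) as
\[
q^{n-1} \sum_{\pi \in \mathcal{P}_{n-1}} (1+q)^{\area(\pi)} = \sum_{G \in \mathcal{C}_n} q^{e(G)} = q^{n-1} \sum_{T \in \mathcal{L}_{n}} (1+q)^{\coinv(T)}.
\]
Canceling the common factor $q^{n-1}$ and then using the bijection $\lambda$ to rewrite the sum over $\mathcal{P}_{n-1}$ as a sum over $\mathcal{L}_n$ yields
\[
\sum_{T \in \mathcal{L}_{n}} (1+q)^{\area(\lambda^{-1}(T))} = \sum_{T \in \mathcal{L}_{n}} (1+q)^{\coinv(T)}.
\]

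This is an identity of polynomials in $q$, hence of polynomials in $u := 1+q$. Making the substitution $q \mapsto q - 1$ (equivalently, viewing both sides as polynomials in the variable $u$ and then replacing $u$ by $q$) gives the claimed identity
\[
\sum_{T \in \mathcal{L}_{n}} q^{\area(\lambda^{-1}(T))} = \sum_{T \in \mathcal{L}_{n}} q^{\coinv(T)}.
\]

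There is essentially no obstacle here: the corollary is a direct consequence of two established identities that both sum the same weighted enumeration of connected graphs over $\mathcal{C}_n$. The only mild subtlety is to observe that an identity of the form $\sum_T (1+q)^{a(T)} = \sum_T (1+q)^{b(T)}$ in fact forces the equidistribution $\sum_T q^{a(T)} = \sum_T q^{b(T)}$, which is immediate from the change of variable $q \mapsto q - 1$.
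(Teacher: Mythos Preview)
Your proof is correct and follows exactly the route the paper indicates: combine Theorem~\ref{thm: pf_graph_formula} (shifted to $n-1$) with the Gessel--Wang identity~\eqref{equation: coinv_graph}, cancel the common factor, and use $\lambda$ to re-index. The paper leaves the final substitution $q\mapsto q-1$ implicit, but your making it explicit is a welcome clarification rather than a departure.
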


\bibliographystyle{alpha}
\bibliography{catalan}{}

\end{document}